\newtheorem{definition}{Definition}[section]
\newtheorem{lemma}{Lemma}[section]
\newtheorem{thm}{Theorem}[section]
\newtheorem{prop}{Proposition}[section]
\newtheorem{remark}{Remark}[section]
\numberwithin{equation}{section}
\def\Ric{\textmd{Ric}}
\def\R{\mathbb{R}}
\def\R{\mathbb{R}}
\def\S{\Sigma}
\def\({\left(}
\def\){\right)}
\def\={\stackrel{(n=2)}{=}}
\def\p{\partial}
\newcommand{\be}{\begin{equation}}
\newcommand{\ee}{\end{equation}}
\newcommand{\bee}{\begin{equation*}}
\newcommand{\eee}{\end{equation*}}
\newcommand{\m}{\mathfrak{m}}
\begin{document}
	
	\title[]{Quasi-local Penrose inequalities with electric charge}
	
	\author[]{Po-Ning Chen}
	\address{Department of Mathematics, University of California, Riverside, USA}
	\email{poningc@ucr.edu}

	\author[]{Stephen McCormick}
	\address{Matematiska institutionen, Uppsala universitet, 751 06 Uppsala, Sweden}
	\email{stephen.mccormick@math.uu.se}

	\maketitle

	\begin{abstract}
The Riemannian Penrose inequality is a remarkable geometric inequality between the ADM mass of an asymptotically flat manifold with non-negative scalar curvature and the area of its outermost minimal surface. A version of the Riemannian Penrose inequality has also been established for the Einstein--Maxwell equations, where the lower bound on the mass also depends on the electric charge. In the context of quasi-local mass, one is interested in determining if, and for which quasi-local mass definitions, a quasi-local version of these inequalities also holds.
		
		It is known that the Brown--York quasi-local mass satisfies a quasi-local Riemannian Penrose inequality, however in the context of the Einstein--Maxwell equations, one expects that a quasi-local Riemannian Penrose inequality should also include a contribution from the electric charge. This article builds on ideas of Lu and Miao in \cite{LM} and of the first-named author in \cite{Chen18} to prove some charged quasi-local Penrose inequalities for a class of compact manifolds with boundary. In particular, we impose that the boundary is isometric to a closed surface in a suitable Reissner--Nordström manifold, which serves as a reference manifold for the quasi-local mass that we work with. In the case where the reference manifold has zero mass and non-zero electric charge, the lower bound on quasi-local mass is exactly the lower bound on the ADM mass given by the charged Riemannian Penrose inequality.
	\end{abstract}
	
	\section{Introduction}
	Motivated by the cosmic censorship conjecture, Penrose conjectured that an isolated system satisfying an appropriate energy condition should have mass bounded from below in terms of the horizon area of any black holes it contains \cite{Penrose}. While the full conjecture remains open, the Riemannian (or time-symmetric) case has been resolved in the affirmative and is a celebrated result in geometric analysis. It was proven by Huisken and Ilmanen \cite{H-I01} in the case of a connected horizon, and independently by Bray \cite{Bray01} allowing for the horizon to be disconnected. When gravity is coupled to electromagnetic fields -- the Einstein--Maxwell equations -- the lower bound on total mass should also contain a contribution from the electric charge; the charged Penrose inequality. The Riemannian case of this has also been settled. Under the assumption that the horizon is connected, this result follows from an old argument of Jang \cite{Jang79} when combined with the more recent development of weak inverse mean curvature flow by Huisken and Ilmanen \cite{H-I01} (see also \cite{Mc19}). The case of a disconnected horizon is more subtle when one includes electric charge and some extra care must be taken. However, an appropriate version of Bray's approach has been developed to account for the electric charge by Khuri, Weinstein and Yamada \cite{KWY-2017}, resolving the inequality in the case of disconnected horizons.

In addition to the Riemannian Penrose inequality for asymptotically flat manifolds, there has been recent interest in quasi-local versions of the Penrose inequality. Namely, one would like to bound the quasi-local mass of a region from below in terms of an outermost horizon area. As there are many candidates for a quasi-local mass definition in the literature, we briefly digress to mention the quasi-local mass definition that we focus on in this article. The Brown--York mass is a definition that arises from the Hamiltonian formulation of general relativity and is defined in terms of an isometric embedding into a reference manifold, which is taken to be Euclidean space. Variations of this, as well as the more modern Wang--Yau mass, have been studied using different spaces as the reference manifold (see \cite{CWWY}). When we refer to quasi-local mass with respect to a given reference manifold, it is this definition that we are considering. Some more details are given after the statement of Theorem \ref{thm-QLPenrose-C}.

A quasi-local Penrose inequality for the usual Brown--York mass was established by Shi and Tam \cite{ShiTam07} and by Miao \cite{Miao09}. However, one observes that for a surface enclosing a horizon, the obtained Penrose inequality is a strict inequality. This contrasts the Riemannian Penrose inequality for asymptotically flat manifolds where equality holds precisely for the Schwarzschild manifold. To obtain a sharp inequality where the equality holds for surfaces in the Schwarzschild manifold, Lu and Miao proved a Penrose inequality for a quasi-local mass with a Schwarzschild manifold as the reference manifold \cite{LM}. In \cite{LM1}, Lu and Miao proved that the equality holds for their quasi-local Penrose inequality if and only if the surface is in the Schwarzschild manifold. See also \cite{CZ,SWY}.

To include the effect of matter, the first-named author proved a quasi-local Penrose inequality for the quasi-local mass with reference to a more general static manifold, including the Reissner--Nordström manifold \cite{Chen18}. Nevertheless, while the reference manifolds were extended, the matter fields do not contribute to the inequality obtained in \cite{Chen18}. In particular, for a surface in a spacetime enclosing a non-vanishing matter, the inequality is always strict. The main goal of this article is to strengthen the inequality obtained in \cite{Chen18} to include the contribution from the electric charge. We remark that Alaee, Khuri and Yau \cite{AKY} also give different versions of the quasi-local Penrose inequality, including contributions due to both angular momentum and the electric field. Our inequality captures the full contribution of the electric field to the Penrose inequality. In particular, the equality case holds for the inequality we obtain for surfaces in the Reissner--Nordström manifold.

	The precise details of the charged Riemannian Penrose inequality that we require will be given in Section \ref{S-Setup}. However, the statement is essentially as follows. Given asymptotically flat time-symmetric initial data for the Einstein-Maxwell equations satisfying the dominant energy condition, with outermost minimal surface $\S_H$ then
	\be 
		\m_{ADM}\geq \(\frac{|\S_H|}{16\pi}\)^{\frac12}\(1+\frac{4\pi Q^2}{ |\S_H|}\),
\ee 
where $\m_{ADM}$ is the ADM mass, $|\S_H|$ denotes the area of $\S_H$, and $Q$ is the total electric charge. Usually, this inequality is stated with an additional hypothesis that the electric field is divergence-free. However, to establish our main results, we will need to employ versions of this inequality that hold when the electric field is not divergence-free. For the sake of exposition, we reserve the statements of these versions of the inequality for Section \ref{S-Setup}.

Our main results rely on the corner-smoothing technique of Miao \cite{Miao02} and the smoothing of the electric field as in \cite{AKY}. We therefore recall the following definition of a manifold with metric admitting corners along $\S$. Consider a manifold $M$ with a closed hypersurface $\S$ that encloses a compact region $\Omega$ in $M$.
	\begin{definition}
		A metric admitting corners along $\S$ is defined to be a pair $(g_{-}, g_{+})$, where $g_{-}$ and
		$g_{+}$ are $C^{2,\alpha}_{loc}$ metrics on $\Omega$ and 
		$M \setminus \overline{\Omega}$ respectively,
		that are $C^2$ up to the boundary and induce the same metric on
		$\S$.
	\end{definition}
	We say such a metric admitting corners is asymptotically flat if $g_{+}$ is asymptotically flat on $M \setminus \overline{\Omega}$ in the usual sense (see Section \ref{S-Setup}).
	As we are interested in electrically charged initial data sets, we give the following analogous definition.
	\begin{definition}
		A charged manifold admitting corners along a hypersurface $\S$ is defined as a tuple $(M,g_-,g_+,E_-, E_+)$ such that $(g_-,g_+)$ is a metric admitting corners along $\S$ on a manifold $M$, and  $E_{-}$ and
		$E_{+}$ are $C^{1,\alpha}_{loc}$ vector fields on $\Omega$ and 
		$M \setminus \overline{\Omega}$ respectively, up to the boundary.
	\end{definition}
	
	Along the corner, we are particularly interested in the normal projection of $E$, the flux of the electric field. For this reason, we introduce the notation
	\be 
		\Phi:=E\cdot\nu,
	\ee 
	where $\nu$ is the unit normal vector to the surface in question, usually the corner. We then employ the notation
	\be
	\Phi_\pm:=E_\pm \cdot \nu.
	\ee

In this article, we first prove three closely related versions of the charged Riemannian Penrose inequality on a charged asymptotically flat manifold admitting corners, related to each of the three different versions of the charged Riemannian Penrose inequality given in Section \ref{S-Setup}. Specifically, we show the following.
	\begin{thm}\label{thm-penrose-corners-A}
	Let $(M,g_-,g_+,E_-,E_+)$ be a charged manifold with corners such that the outermost minimal surface $\S_H$ of $M$ is contained in $\Omega$. Suppose 
	\be \label{eq-cornerconds-A}
	H_-  -  H_+  \ge  2 \left  |  \Phi_+  -  \Phi_- \right | \qquad \text{ and }\qquad Q_\infty^2 \leq \frac{|\S_H|}{4\pi},
	\ee 
	and on $\Omega$ and $M \setminus \overline{\Omega}$, we have
	\be \label{eq-energy-condition-A}
	R(g)\geq 2|E|^2 + 4 |\nabla \cdot E|
	\ee
	and $ \nabla \cdot E_+ $ is compactly supported. Then
	\be 
	\m_{ADM}\geq\( \frac{|\S_H|}{16\pi}\)^{1/2}\( 1+\frac{4\pi Q_\infty^2}{|\S_H|} \)
	\ee 
	where $Q_\infty$ is the total electric charge on $M$, measured at infinity.
\end{thm}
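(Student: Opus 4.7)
The plan is to reduce Theorem \ref{thm-penrose-corners-A} to the smooth version of the charged Riemannian Penrose inequality (the variant allowing $\nabla\cdot E$ to be non-vanishing but compactly supported, under the energy condition \eqref{eq-energy-condition-A}) promised in Section \ref{S-Setup}, by simultaneously smoothing the metric and the electric field across the corner. Because $\S_H \subset \Omega$, every smoothing operation can be localized in a thin tubular neighborhood $U_\delta$ of $\S$, leaving the horizon and the asymptotic end untouched.

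First I would apply Miao's corner-smoothing procedure to produce, for small $\delta > 0$, a smooth metric $g_\delta$ that agrees with $g_\pm$ outside $U_\delta$, converges to the corner metric as $\delta \to 0$, and whose scalar curvature $R(g_\delta)$ develops a positive concentration on $\S$ corresponding to the distributional corner contribution $2(H_- - H_+)\, d\mu_\S$. In parallel, following Alaee--Khuri--Yau, I would glue $E_-$ and $E_+$ through a cutoff inside $U_\delta$, yielding a smooth vector field $E_\delta$ that agrees with $E_\pm$ outside $U_\delta$ and whose divergence develops a concentration on $\S$ of total mass $(\Phi_+ - \Phi_-)\, d\mu_\S$. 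Since $E_\delta = E_+$ outside a compact set, the total electric charge $Q_\infty$ is preserved. The corner inequality $H_- - H_+ \ge 2|\Phi_+ - \Phi_-|$ is then exactly the statement that the positive distributional scalar curvature created along $\S$ dominates $4|\nabla\cdot E_\delta|$, so that after a standard small conformal deformation $g_\delta \mapsto \tilde g_\delta := u_\delta^4 g_\delta$ with $u_\delta \to 1$, the energy condition \eqref{eq-energy-condition-A} is restored pointwise for the smooth pair $(\tilde g_\delta, E_\delta)$.

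Applying the smooth charged Penrose inequality from Section \ref{S-Setup} to $(\tilde g_\delta, E_\delta)$, and using that $\S_H$ lies inside the unperturbed region, yields
\bee
\m_{ADM}(\tilde g_\delta) \ge \(\frac{|\S_H|_{\tilde g_\delta}}{16\pi}\)^{1/2}\(1+\frac{4\pi Q_\infty^2}{|\S_H|_{\tilde g_\delta}}\).
\eee
Letting $\delta \to 0$, $\m_{ADM}(\tilde g_\delta) \to \m_{ADM}$ and $|\S_H|_{\tilde g_\delta} \to |\S_H|$, while $Q_\infty$ is unchanged throughout. The hypothesis $Q_\infty^2 \le |\S_H|/(4\pi)$ is precisely the condition that $x \mapsto \sqrt{x/(16\pi)}\,(1 + 4\pi Q_\infty^2/x)$ is non-decreasing at $x = |\S_H|$, which is exactly what allows the inequality to pass cleanly to the limit in case $|\S_H|_{\tilde g_\delta}$ approaches $|\S_H|$ from above (the worst case for monotonicity of the lower bound).

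The principal technical obstacle is making the simultaneous metric and electric-field smoothings compatible with the conformal correction: the distributional scalar-curvature surplus produced by the corner $H_- - H_+$ has to match the distributional surplus of $4|\nabla\cdot E_\delta|$ created by the flux jump, and any residual pre-conformal mismatch must be absorbed by $u_\delta$. One therefore needs $u_\delta \to 1$ in a norm strong enough (e.g.\ weighted $C^0$ at infinity) to guarantee convergence of the ADM mass, and the horizon for $\tilde g_\delta$ must remain close to $\S_H$ for the area comparison to be genuine. Both issues are controlled precisely by the sharpness of the corner hypothesis \eqref{eq-cornerconds-A}, which is why the $4|\nabla\cdot E|$ term appears in \eqref{eq-energy-condition-A} with exactly the coefficient that cancels the flux contribution in the smoothing step.
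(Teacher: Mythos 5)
Your overall strategy is the one the paper uses: mollify the metric following Miao, smooth the electric field following Alaee--Khuri--Yau so that the $\delta^{-2}$ concentrations of $R(g_\delta)$ and of $4|\nabla_{g_\delta}\cdot E_\delta|$ along $\S$ cancel under the corner hypothesis \eqref{eq-cornerconds-A}, repair the residual $O(1)$ error by a Schoen--Yau conformal factor, apply the smooth Theorem \ref{thm-A}, and pass to the limit. However, there are two genuine gaps in the way you execute this.

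First, you state the smooth charged Penrose inequality with the area $|\S_H|_{\tilde g_\delta}$, which tacitly assumes that $\S_H$ remains the outermost minimal surface of the deformed manifold. It need not: although the mollification is supported in a collar of $\S$ away from $\S_H$, the conformal factor $u_\delta$ is global, so $\S_H$ is in general no longer minimal for $\tilde g_\delta$, and new minimal surfaces enclosing it can appear. Theorem \ref{thm-A} only bounds $\m_{ADM}(\tilde g_\delta)$ in terms of the outermost minimal surface $\S_\delta$ of $(\widetilde M,\tilde g_\delta)$, which may be disconnected and whose area is a priori unrelated to $|\S_H|$. The missing ingredient is Miao's separate argument (pp.\ 279--280 of \cite{Miao02}) that $|\S_\delta|_{\tilde g_\delta}\to|\S_H|$ along a subsequence, using only the $C^0$ convergence of the metrics; this is not a consequence of ``the sharpness of the corner hypothesis'' as your last paragraph suggests. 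The possible disconnectedness of $\S_\delta$ is also why the disconnected-horizon version of the smooth inequality, and hence the area--charge hypothesis $Q_\infty^2\le|\S_H|/(4\pi)$, is genuinely needed here rather than serving only as a monotonicity device in the limit.

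Second, the conformal step as written does not close. You keep $E_\delta$ fixed while replacing $g_\delta$ by $u_\delta^4 g_\delta$; this loses control of $\nabla\cdot E$ and of the enclosed charge. The scaling that preserves the divergence structure is $\widetilde E_\delta=u_\delta^{-6}E_\delta$, but then $|\widetilde E_\delta|^2_{\tilde g_\delta}=u_\delta^{-8}|E_\delta|^2_{g_\delta}$, which \emph{increases} wherever $u_\delta<1$ --- and $u_\delta\le1$ everywhere by the maximum principle, since $f^-\ge0$. So the conformal change alone worsens the matter terms in \eqref{eq-energy-condition-A}; one must in addition shrink the field by a factor $(1-\epsilon)^2$, verify the energy condition for $(\tilde g_\delta,(1-\epsilon)^2\widetilde E_\delta)$ once $|u_\delta-1|<\epsilon$, and send $\epsilon\to0$ only after $\delta\to0$ (so the charge at infinity is $(1-\epsilon)^2Q_\infty$ along the way, not ``unchanged throughout''). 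One also needs to solve the conformal-factor equation on the manifold doubled across $\S_H$, so that the existence lemma of \cite{SY79} (stated for manifolds without boundary) applies and the reflection symmetry keeps the doubled surface minimal. These points are all fixable, but they are exactly where the work in the paper's proof lies.
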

	\begin{thm}\label{thm-penrose-corners-B}
	Let $(M,g_-,g_+,E_-,E_+)$ be a charged manifold with corners such that the outermost minimal surface $\S_H$ of $M$ is contained in $\Omega$, which we assume is topologically the product of $\S$ and an interval. Suppose further 
	\be \label{eq-cornerconds-B}
	H_-  \geq  H_+  \qquad \text{ and }\qquad   \Phi_+  \leq  \Phi_-,
	\ee 
	and on $\Omega$ and $M \setminus \overline{\Omega}$, we have
	\be  \label{eq-energy-condition-B}
	R(g)\geq 2|E|^2  \qquad \text{ and }\qquad  \nabla\cdot E\leq0.
	\ee

	\be 
	\m_{ADM}\geq\( \frac{|\S_H|}{16\pi}\)^{1/2}\( 1+\frac{4\pi Q_\infty^2}{|\S_H|} \)
	\ee 
	where $Q_\infty \ge 0$ is the total electric charge on $M$, measured at infinity.
\end{thm}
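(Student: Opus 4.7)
The plan is to reduce Theorem \ref{thm-penrose-corners-B} to the smooth version of the charged Riemannian Penrose inequality adapted to the hypotheses $R(g) \geq 2|E|^2$, $\nabla \cdot E \leq 0$, and $Q_\infty \geq 0$ (one of the variants to be collected in Section \ref{S-Setup}). Given such a smooth inequality, one need only construct a one-parameter family $(M, g_\veps, E_\veps)$ of smooth charged asymptotically flat data whose ADM masses, total charges, and outermost horizon areas converge to those of the original cornered data, and which satisfies the smooth hypotheses for every $\veps > 0$.

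The metric side is handled by the corner-smoothing technique of Miao \cite{Miao02}. Because $H_- \geq H_+$ along $\S$, a family of mollified metrics $g_\veps$ agreeing with $g_\pm$ outside an $\veps$-tubular neighborhood of $\S$ carries a distributional scalar curvature concentrated at the corner that is non-negative; after a small conformal correction one obtains smooth metrics with $R(g_\veps) \geq 0$ globally, and with ADM mass converging to $\m_{ADM}(g_+)$. Since $\S_H$ lies in the interior of $\Omega$, the outermost minimal surface of $g_\veps$ coincides with (or has area converging to that of) $\S_H$ for small $\veps$.

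The electric field is smoothed simultaneously, using the product structure $\Omega \cong \S \times I$ to obtain global tubular coordinates $(s, y)$ near the corner with $s < 0$ inside $\Omega$. Decompose $E_\pm = \Phi_\pm(s,y)\,\p_s + W_\pm(s,y)$ with $W_\pm$ tangent to the level sets of $s$. Mollify in the $s$-variable against a standard kernel so that the normal component $\Phi_\veps$ is a smooth, non-increasing interpolation from $\Phi_-$ to $\Phi_+$ (possible precisely because $\Phi_+ \leq \Phi_-$), and $W_\veps$ is a smooth convex combination of $W_-$ and $W_+$. Computing $\nabla_{g_\veps} \cdot E_\veps$ in these coordinates, the dominant term $\p_s \Phi_\veps$ is non-positive, and the remaining terms (involving the mean curvature of level sets and the intrinsic surface divergence of $W_\veps$) can be kept non-positive or $o(1)$ by choosing the tangential smoothing so its surface divergence matches the corresponding convex combination of $\nabla^\S \cdot W_\pm$. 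Outside the tubular neighborhood, $\nabla_{g_\veps} \cdot E_\veps = \nabla \cdot E_\pm \leq 0$ by hypothesis, so $\nabla_{g_\veps} \cdot E_\veps \leq 0$ globally; the monotone, compactly supported nature of the modification preserves $Q_\infty \geq 0$.

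The main obstacle is the compatibility of the two smoothings: one must ensure $R(g_\veps) \geq 2|E_\veps|^2$ on the entire smoothing region, not merely $R(g_\veps) \geq 0$. The corner contribution to $R(g_\veps)$ is non-negative, while $|E_\veps|^2$ is uniformly bounded and coincides with $|E_\pm|^2$ outside the tubular neighborhood where $R(g_\pm) \geq 2|E_\pm|^2$ by hypothesis; a final small conformal correction $g_\veps \mapsto u_\veps^4 g_\veps$, $E_\veps \mapsto u_\veps^{-4} E_\veps$ (which preserves the sign of the divergence in three dimensions), solving a linear equation of the form $-\Lap u_\veps + c(R(g_\veps) - 2|E_\veps|^2)u_\veps = \eta_\veps$ with $\eta_\veps \to 0$, restores the full inequality with negligible change in ADM mass, charge, and horizon area. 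Applying the smooth ``Version B'' charged Penrose inequality to $(M, g_\veps, E_\veps)$ and sending $\veps \to 0$ then yields the stated bound.
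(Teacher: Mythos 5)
Your overall strategy (Miao-type mollification of the metric, simultaneous smoothing of the electric field, a conformal correction to restore $R\geq 2|E|^2$, then applying a smooth charged Penrose inequality and passing to the limit) is the same as the paper's, but there are two genuine gaps in the electric-field and horizon steps. First, the sign condition $\nabla_{g_\veps}\cdot E_\veps\leq 0$ does not follow from mollification alone. The mollified field's divergence has the form $(\Phi_+-\Phi_-)\delta^{-2}\cdot(\text{mollifier})+O(1)$: the singular leading term is indeed non-positive, but the $O(1)$ remainder has no sign, is supported on the whole collar $\S\times(-\delta,\delta)$ (including where the mollifier vanishes), and cannot be arranged to be non-positive by tuning the tangential part of the smoothing — the cross terms involving the $\delta$-dependent level-set geometry and the $s$-derivatives of $\Phi_\pm$ are not at your disposal. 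The paper handles this by a genuinely additional step: solving $\Delta_{g_\delta}f_\delta=\widetilde h_\delta$ (the $O(1)$ error) on the doubled manifold and replacing $E_\delta$ by $E_\delta-\nabla f_\delta$, which makes the divergence exactly non-positive; one then needs weighted Sobolev estimates showing $|\nabla f_\delta|^2\to 0$ in $L^p$ for $p\in[1,3]$ so that the energy condition can still be restored conformally and the total charge converges. Your proposal has no substitute for this. (Relatedly, the conformal scaling you propose, $E_\veps\mapsto u_\veps^{-4}E_\veps$, does not preserve the sign of the divergence; the correct weight is $u^{-6}$, since $\nabla_{u^4g}\cdot(u^{-6}E)=u^{-6}\nabla_g\cdot E$.)

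Second, the smooth inequality you need here (the Jang/inverse-mean-curvature-flow version with $Q_\infty\nabla\cdot E\leq 0$) requires the outermost minimal surface to be \emph{connected}. After the conformal change the outermost horizon of $g_\veps$ need not be $\S_H$; Miao's argument only controls its area. The hypothesis that $\Omega$ is topologically $\S\times I$ is there precisely to force the approximating outermost horizons, being homologous to $\S_H$ inside the cylinder, to be connected — you use the product structure only to set up tubular coordinates and never address connectedness, so the application of the smooth inequality is unjustified as written.
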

\begin{thm}\label{thm-penrose-corners-C}
	Let $(M,g_-,g_+,E_-,E_+)$ be a charged manifold with corners such that the outermost minimal surface $\S_H$ of $M$ is contained in $\Omega$, which we assume is topologically the product of $\S$ and an interval. Suppose further 
	\be \label{eq-cornerconds-C}
	H_-  \geq  H_+  \qquad \text{ and }\qquad   \Phi_+  \geq  \Phi_-,
	\ee 
	and on $\Omega$ and $M \setminus \overline{\Omega}$, we have
	\be  \label{eq-energy-condition-C}
	R(g)\geq 2|E|^2  \qquad \text{ and }\qquad  \nabla\cdot E\geq0.
	\ee
	
	\be 
	\m_{ADM}\geq\( \frac{|\S_H|}{16\pi}\)^{1/2}\( 1+\frac{4\pi Q_H^2}{|\S_H|} \)
	\ee 
	where $Q_H\ge 0$ is the electric charge of $\S_H$.
\end{thm}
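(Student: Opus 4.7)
The plan is to follow the same template as Theorems \ref{thm-penrose-corners-A} and \ref{thm-penrose-corners-B}: combine Miao's corner smoothing of the metric \cite{Miao02} with the Alaee--Khuri--Yau smoothing of the electric field \cite{AKY} to produce a sequence of smooth charged asymptotically flat data to which a smooth charged Riemannian Penrose inequality from Section \ref{S-Setup} applies, then pass to the limit. The point that distinguishes Theorem \ref{thm-penrose-corners-C} from the previous two is the direction of the inequalities $\Phi_+\geq\Phi_-$ and $\nabla\cdot E\geq 0$; together these make the distributional divergence of $E$ across $M$ non-negative, so the natural conclusion is a Penrose bound in terms of the horizon charge $Q_H$ rather than $Q_\infty$ (which need not even be finite here). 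The sharp model is Reissner--Nordstr\"om with charge $Q_H$.

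Concretely, I would first apply Miao's construction, which uses $H_-\geq H_+$ to produce a family of $C^{2,\alpha}_{loc}$ metrics $g_\delta$ that agree with $g_\pm$ outside a $\delta$-neighbourhood of $\S$ and whose scalar curvature has only a controlled negative part as $\delta\to 0$. A conformal deformation $\tilde g_\delta=u_\delta^4 g_\delta$ then restores the energy condition $R(\tilde g_\delta)\geq 2|E_\delta|^2_{\tilde g_\delta}$ with $u_\delta\to 1$ and $\m_{ADM}(\tilde g_\delta)\to\m_{ADM}$. I would simultaneously mollify $E$ following the AKY procedure: since $\nabla\cdot E_\pm\geq 0$ on each side and the jump $\Phi_+-\Phi_-\geq 0$ contributes a non-negative delta to the distributional divergence, the mollified $E_\delta$ can be arranged to satisfy $\nabla_{\tilde g_\delta}\cdot E_\delta\geq 0$ pointwise. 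Because $\S_H$ sits strictly interior to $\Omega$ and $\tilde g_\delta\to g_-$ smoothly away from $\S$, the outermost minimal surface $\S_{H,\delta}$ of $\tilde g_\delta$ converges to $\S_H$ in area, and the flux $Q_{H,\delta}$ of $E_\delta$ through $\S_{H,\delta}$ converges to $Q_H$. Applying the version of the smooth charged Penrose inequality with $\nabla\cdot E\geq 0$ and horizon charge $Q_H$ stated in Section \ref{S-Setup}, and sending $\delta\to 0$, yields the claimed bound.

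The main obstacle is making the two smoothings compatible: the conformal correction needed to enforce $R\geq 2|E|^2$ must not destroy the pointwise sign $\nabla\cdot E_\delta\geq 0$, and the limits for area and charge of the perturbed horizon must be justified. This is essentially the same delicate issue as in Theorems \ref{thm-penrose-corners-A} and \ref{thm-penrose-corners-B}; it is handled by choosing the conformal weight for $E$ so that the divergence transforms by a positive factor (keeping $E^i\sqrt{\det g}$ invariant), and by making the conformal factor sufficiently small in a strong enough norm that the outermost minimal surface, its area, and its enclosed electric flux are perturbed by a negligible amount. The remaining piece is the statement and proof of the smooth charged Penrose inequality with $\nabla\cdot E\geq 0$ and horizon charge $Q_H$, which is handled in Section \ref{S-Setup} by combining Jang's argument with weak inverse mean curvature flow and then absorbing the non-negative divergence into the monotonicity of the Geroch--Hawking functional.
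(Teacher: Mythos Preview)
Your overall strategy matches the paper's: smooth the metric \`a la Miao, smooth the electric field \`a la Alaee--Khuri--Yau, restore the energy condition by a small conformal change (noting that $\nabla\cdot E$ transforms by a positive factor under $E\mapsto u^{-6}E$, $g\mapsto u^4g$), and then apply the smooth charged Penrose inequality of Theorem~\ref{thm-C} to the approximating data. The role of the cylinder hypothesis is exactly what the paper uses it for: it forces the outermost minimal surface $\S_i$ of the approximating data to be connected, so that Theorem~\ref{thm-C} (which comes from inverse mean curvature flow and needs a connected horizon) is applicable. You should make that explicit.

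There is, however, a genuine gap in your limiting argument. You assert that the flux $Q_{H,\delta}$ of $E_\delta$ through the perturbed outermost horizon $\S_{H,\delta}$ converges to $Q_H$. This is not justified, and the paper explicitly warns against it: while Miao's arguments give $|\S_i|_{\widetilde g_{\delta_i}}\to|\S_H|$, one only knows that the $\S_i$ lie in a bounded region and have bounded area; there is no control on their location or shape that would force the enclosed charge to converge. In the paper's words, ``the electric charge evaluated on $\S_i$ may be far from the charge on $\S_H$.'' The paper circumvents this entirely by a one-line application of the divergence theorem: since $\nabla_{\widetilde g_\delta}\cdot\widetilde E_\delta\geq 0$ and $\S_i$ encloses $\S_H$, one has $Q(\S_i)\geq Q(\S_H)\geq 0$, and since the right-hand side of the Penrose bound is monotone in the charge (for charges satisfying $4\pi Q^2\leq|\S|$), this inequality is enough to pass to the limit. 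Replace your convergence claim for the charge with this monotonicity argument and the proof goes through.
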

We briefly remark on the condition that $\Omega$ be a cylinder, appearing in the above two theorems. The proofs rely on approximating the manifold with corner with a sequence of smooth manifolds, and while we can control the area of the outermost horizon in the approximating sequence we cannot rigorously rule out the possibility that the outermost horizon in the approximating sequence becomes disconnected. The proofs then apply a version of the charged Riemannian Penrose inequality that follows from the inverse mean curvature flow approach, which requires that the outermost horizon be connected.

	As a consequence of the above theorems, we prove the following quasi-local versions of the charged Penrose inequality. Below, we refer to two convexity conditions without giving explicit details for the sake of exposition. Their precise definitions are given in Section \ref{S-Extensions}.
	\begin{thm} \label{thm-QLPenrose-A}
		Let $(\Omega,g)$ be a compact manifold with two boundary components, $\S_H$ and $\S$, a horizon $(H=0)$ component and an outer component with $H>0$, respectively. Let $E$ be a vector field on $\Omega$ representing the electric field, satisfying $R(g)\geq2|E|^2+4|\nabla\cdot E|$. Assume that $\S$ isometrically embeds in some Reissner--Nordstr\"om manifold with total mass  $\overline m$ and total charge $\overline Q$ where $\overline Q^2 \leq \frac{|\S_H|}{4\pi}$. Suppose that one of the following holds
		\begin{enumerate}[A.]
		\item the convexity condition \emph{($\dagger$)} and $\overline m > 0$, or
		\item the convexity condition \emph{($\dagger\dagger$)} and $\overline m=0$,
		\end{enumerate}
		 and assume in addition that
		\[
		H > 2 | \overline \Phi -  \Phi | 
		\]
		where $\overline \Phi$ is the electric flux in the Reissner--Nordstr\"om manifold. Then
		\be 
		\frac{1}{8\pi}\int_{\S}V(H_o-H+2 | \overline \Phi -  \Phi|  )\,d\S\geq \( \frac{|\S_H|}{16\pi} \)^{1/2}\( 1+\frac{4\pi \overline Q^2}{|\S_H|}\)-\overline m
		\ee 
		where $V$ is the static potential for the Reissner--Nordstr\"om manifold and $H_o$ is the mean curvature of the isometric embedding.
	\end{thm}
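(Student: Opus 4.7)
The plan is to reduce the claim to Theorem~\ref{thm-penrose-corners-A} via a gluing construction adapted from \cite{LM, Chen18}: extend $(\Omega, g, E)$ across $\S$ by a suitably modified Reissner--Nordstr\"om exterior, producing a charged asymptotically flat manifold with corners along $\S$, and then squeeze the ADM mass of this extension between the Penrose bound (from Theorem~\ref{thm-penrose-corners-A}) and an upper bound given by a Shi--Tam-type monotonicity formula.

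For the exterior construction, let $\overline{M}$ denote the Reissner--Nordstr\"om manifold of mass $\overline{m}$ and charge $\overline{Q}$ in which $\S$ embeds isometrically, and let $U$ be the unbounded component of $\overline{M}\setminus \S$. Convexity condition ($\dagger$) in case~A, or ($\dagger\dagger$) in case~B, supplies a foliation $\{\S_t\}_{t\geq 0}$ of $U$ starting at $\S_0 = \S$ with respect to which the Reissner--Nordstr\"om metric takes the form $\overline{g} = \overline{u}^2\, dt^2 + g_t$. Following the Shi--Tam type construction with Reissner--Nordstr\"om reference developed in \cite{Chen18}, I define a new metric $g_+ = u^2\, dt^2 + g_t$ on $U$ by solving a quasilinear parabolic PDE for $u$ that ensures $R(g_+) \geq 2|\overline{E}|^2$, where $\overline{E}$ is the divergence-free Reissner--Nordstr\"om electric field, with $u \to 1$ at infinity so that $(U, g_+)$ is asymptotically flat, and with boundary data at $t=0$ chosen so that the mean curvature of $\S$ in $(U, g_+)$ with respect to the outward normal equals
\[
H_+ := H - 2|\Phi - \overline{\Phi}|.
\]
The hypothesis $H > 2|\overline{\Phi} - \Phi|$ guarantees $H_+ > 0$, which permits the parabolic problem to be set up and keeps each leaf $\S_t$ mean-convex in $(U, g_+)$.

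Setting $g_- = g$, $E_- = E$ on $\Omega$ and $E_+ = \overline{E}$ on $U$, the gluing along $\S$ yields a charged manifold admitting corners that satisfies the hypotheses of Theorem~\ref{thm-penrose-corners-A}: the induced metrics agree on $\S$, the identity $H_- - H_+ = 2|\Phi_+ - \Phi_-|$ gives \eqref{eq-cornerconds-A}, $Q_\infty = \overline{Q}$ satisfies the required charge bound, and \eqref{eq-energy-condition-A} follows from the hypothesis on $\Omega$ together with $R(g_+) \geq 2|\overline{E}|^2$ and $\nabla \cdot \overline{E} = 0$ on $U$. Mean-convexity of the leaves $\S_t$ together with the maximum principle forces any minimal surface of the glued manifold into $\overline{\Omega}$, and $H_+ > 0$ excludes $\S$ itself; hence $\S_H$ remains the outermost minimal surface. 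Theorem~\ref{thm-penrose-corners-A} then yields
\[
m_{ADM}^{ext} \geq \(\frac{|\S_H|}{16\pi}\)^{1/2} \(1 + \frac{4\pi \overline{Q}^2}{|\S_H|}\).
\]

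The Shi--Tam-type monotonicity along $\S_t$ with Reissner--Nordstr\"om reference, as established in \cite{Chen18}, supplies the matching upper bound
\[
m_{ADM}^{ext} \leq \overline{m} + \frac{1}{8\pi}\int_\S V(H_o - H_+)\, d\S = \overline{m} + \frac{1}{8\pi}\int_\S V\bigl(H_o - H + 2|\overline{\Phi} - \Phi|\bigr)\, d\S,
\]
and combining the two estimates and rearranging yields the desired inequality. Case~B is handled analogously, with the foliation adapted to the $\overline{m} = 0$ geometry under the stronger convexity ($\dagger\dagger$). The main obstacle is the exterior construction itself: arranging the scalar curvature bound $R(g_+) \geq 2|\overline{E}|^2$, the prescribed boundary mean curvature $H_+$, and the Shi--Tam monotonicity simultaneously in the presence of the charge; this is the content of the extension of \cite{Chen18} needed here, where the boundary data for $u$ is shifted by $-2|\Phi - \overline{\Phi}|$ precisely to saturate the corner inequality of Theorem~\ref{thm-penrose-corners-A}.
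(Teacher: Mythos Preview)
Your overall strategy is exactly the paper's: build a Shi--Tam type extension from the Reissner--Nordstr\"om reference via \cite{Chen18}, choose the initial value of $u$ so that the exterior mean curvature equals $H - 2|\overline\Phi - \Phi|$, glue, apply Theorem~\ref{thm-penrose-corners-A}, and compare with the monotone quantity. There is, however, one step that fails as written: you set $E_+ = \overline E$, the Reissner--Nordstr\"om electric field itself, on the extension $(U,g_+)$. The field $\overline E$ is divergence-free with respect to $\overline g$, not with respect to $g_+ = u^2\,dt^2 + g_t$; a direct computation produces terms proportional to derivatives of $u$, so $\nabla_{g_+}\cdot\overline E$ is generically nonzero and the hypothesis that $\nabla\cdot E_+$ be compactly supported in Theorem~\ref{thm-penrose-corners-A} is lost. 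The energy condition also breaks: with $g_+ = u^2\,dt^2 + g_t$ one has $|\overline E|^2_{g_+} = u^2(\overline E^s)^2 + |\overline E^T|^2$, whereas the prescribed scalar curvature equation \eqref{eq-prescR} gives $R(g_+) = 2(\overline E^s)^2 + 2u^{-2}|\overline E^T|^2$, so $R(g_+) - 2|\overline E|^2_{g_+} = 2(1-u^2)(\overline E^s)^2 + 2(u^{-2}-1)|\overline E^T|^2$ can be negative where $u>1$. Finally, the flux on $\S$ with this choice is $\Phi_+ = u(0)\,\overline\Phi$ rather than $\overline\Phi$, and your corner identity $H_- - H_+ = 2|\Phi_+ - \Phi_-|$ no longer holds.

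The fix, which is precisely the content of Section~\ref{S-Extensions}, is to rescale the electric field by $u^{-1}$: take $\widetilde E = u^{-1}\bigl(\overline E^s\,\partial_s + \overline E^T\bigr)$ as in \eqref{eq-Efielddefn}. One then checks directly that $\nabla_{g_+}\cdot\widetilde E = u^{-1}\nabla_{\overline g}\cdot\overline E = 0$, that $2|\widetilde E|^2_{g_+}$ equals exactly the prescribed scalar curvature $R(g_+)$, and that the normal flux satisfies $\widetilde\Phi = \overline\Phi$ independently of $u$. With $E_+ = \widetilde E$ your argument goes through verbatim and coincides with the paper's proof.
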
 	
\begin{remark}
The convexity condition \emph{($\dagger$)}, which was introduced in \cite{Chen18}, is given by Definition \ref{def-convex1}. The convexity condition \emph{($\dagger\dagger$)} is a new condition closely related to \emph{($\dagger$)}, and is given by Definition \ref{def-convex2}.
\end{remark}
	\begin{thm} \label{thm-QLPenrose-B}
	Let $(\Omega,g)$ be a compact manifold that is topologically a cylinder with boundary components, $\S_H$ and $\S$; a horizon $(H=0)$ component and an outer component with $H>0$, respectively. Let $E$ be a vector field on $\Omega$ representing the electric field, satisfying $R(g)\geq2|E|^2$ and $\nabla\cdot E\leq 0$. Assume that $\S$ isometrically embeds in some Reissner--Nordstr\"om manifold of charge $\overline Q\geq 0$ with mean curvature $H_o$ such that one of the following holds
		\begin{enumerate}[A.]
		\item the convexity condition \emph{($\dagger$)} and $\overline m > 0$, or
		\item the convexity condition \emph{($\dagger\dagger$)} and $\overline m=0$,
		\end{enumerate}
	and assume in addition that
	\[
		\overline\Phi \leq  \Phi
	\]
	where $\overline \Phi$ is the electric flux in the Reissner--Nordstr\"om manifold. Then
	\be 
	\frac{1}{8\pi}\int_{\S}V(H_o-H)\,d\S\geq \( \frac{|\S_H|}{16\pi} \)^{1/2}\( 1+\frac{4\pi \overline Q^2}{|\S_H|}\)-\overline m
	\ee 
	where $V$ is the static potential for the Reissner--Nordstr\"om manifold, and $\overline m$ is its mass.
\end{thm}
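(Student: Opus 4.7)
The strategy parallels the proof of Theorem \ref{thm-QLPenrose-A}, but now invokes Theorem \ref{thm-penrose-corners-B} in place of Theorem \ref{thm-penrose-corners-A} to accommodate the sign hypotheses $\nabla\cdot E\leq 0$ and $\overline\Phi\leq\Phi$. The plan is to glue $(\Omega, g, E)$ along $\S$ to a carefully chosen extension built from the Reissner--Nordstr\"om reference, producing a charged asymptotically flat manifold with corner whose ADM mass is controlled from above by $\overline m + \tfrac{1}{8\pi}\int_\S V(H_o - H)\,d\S$ and whose outermost horizon is still $\S_H$. Applying Theorem \ref{thm-penrose-corners-B} to the glued manifold then yields the desired inequality after rearrangement.

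More concretely, let $\overline N$ denote the unbounded component of the complement of the embedded image of $\S$ in the Reissner--Nordstr\"om reference of mass $\overline m$ and charge $\overline Q$, equipped with the reference metric $\bg$ and the standard radial electric field $E_{RN}$. Following the quasi-spherical approach of Lu--Miao \cite{LM} (for a Schwarzschild reference) and the first-named author \cite{Chen18} (for more general static references), replace $\bg$ on $\overline N$ by a new metric $\widetilde g = u^2\, dr^2 + g_r$, where $\{\S_r\}$ is a natural foliation of $\overline N$ (for instance, by level sets of the static potential $V$) and $u$ solves a parabolic equation along $r$ ensuring $R(\widetilde g)=2|E_{RN}|^2$. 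The initial value of $u$ on $\S$ is chosen so that the mean curvature of $\S$ in $\widetilde g$ matches $H$, so that there is no mean curvature jump at the corner. The convexity condition $(\dagger)$ when $\overline m>0$, or $(\dagger\dagger)$ when $\overline m=0$, ensures that the parabolic equation for $u$ is solvable globally on $\overline N$, that the resulting extension remains asymptotic to Reissner--Nordstr\"om, and that no new minimal surface appears outside $\S$ in the extension. A Shi--Tam type monotonicity formula along the foliation, adapted to the Reissner--Nordstr\"om reference, then provides
\[
\m_{ADM}(\widetilde g) \;\leq\; \overline m + \frac{1}{8\pi}\int_\S V(H_o - H)\,d\S.
\]

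Gluing $(\Omega, g, E)$ to $(\overline N, \widetilde g, E_{RN})$ along $\S$ gives a charged manifold with corner $\widetilde M$ for which the hypotheses of Theorem \ref{thm-penrose-corners-B} are readily checked: $R\geq 2|E|^2$ on each piece by construction; $\nabla\cdot E\leq 0$ on $\Omega$ by hypothesis and $\nabla\cdot E_{RN}=0$ on $\overline N$, while the hypothesis $\overline\Phi\leq \Phi$ at $\S$ gives $\Phi_+\leq\Phi_-$ at the corner; $H_-=H_+$ at $\S$ by the choice of $u$; $\Omega$ is topologically cylindrical by hypothesis; and $\S_H$ remains outermost thanks to the convexity condition. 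Theorem \ref{thm-penrose-corners-B} then yields
\[
\m_{ADM}(\widetilde M) \;\geq\; \left(\frac{|\S_H|}{16\pi}\right)^{1/2}\left(1+\frac{4\pi \overline Q^2}{|\S_H|}\right),
\]
and combining with the monotonicity bound above produces the claim. The main obstacle is the existence-and-monotonicity step for the extension, especially in the case $\overline m=0$, where the static potential $V$ vanishes on the horizon of the reference Reissner--Nordstr\"om manifold; the auxiliary convexity condition $(\dagger\dagger)$ is introduced precisely to keep $\S$ away from this degenerate locus so that the quasi-spherical construction and the associated mass monotonicity carry over uniformly.
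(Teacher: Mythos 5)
Your strategy is exactly the paper's: build the Shi--Tam type extension of Section \ref{S-Extensions} over the exterior of the embedded image of $\S$ in the Reissner--Nordstr\"om reference, impose the boundary condition $u=H_o/H$ so that the mean curvatures agree across the corner, verify the hypotheses of Theorem \ref{thm-penrose-corners-B} for the glued charged manifold with corners, and combine the resulting Penrose inequality with the monotonicity of $\int_{\S_s}VH_o(1-\tfrac{1}{u})\,d\S_s$. Two details in your sketch need correcting, and the first would break the verification of the corner hypotheses as written.

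First, the electric field on the extension cannot be the unmodified reference field $\overline E=\overline E^s\p_s+\overline E^T$. With respect to $\widetilde g=u^2\,ds^2+\sigma_s$ one computes $\nabla_{\widetilde g}\cdot\overline E=\overline E(\log u)\neq0$ in general, the flux through the leaves becomes $u\,\overline\Phi$ rather than $\overline\Phi$, and $2|\overline E|^2_{\widetilde g}=2u^2(\overline E^s)^2+2|\overline E^T|^2$ does not match the prescribed scalar curvature $R(\widetilde g)=\overline R+(u^{-2}-1)T$. The paper instead takes $\widetilde E=u^{-1}\left(\overline E^s\p_s+\overline E^T\right)$, for which $\nabla_{\widetilde g}\cdot\widetilde E=0$, $R(\widetilde g)=2|\widetilde E|^2_{\widetilde g}$, and $\widetilde\Phi=\overline\Phi$ all hold; the last identity is precisely what converts the hypothesis $\overline\Phi\leq\Phi$ into the corner condition $\Phi_+\leq\Phi_-$ required by Theorem \ref{thm-penrose-corners-B}. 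Second, your account of why ($\dagger\dagger$) is needed is not accurate: when $\overline m=0$ the reference manifold has no horizon and $V=\sqrt{1+\overline Q^2/r^2}\geq1$ never vanishes, so there is no ``degenerate locus'' to avoid. The genuine obstruction is that $V$ is then \emph{decreasing} in the outward normal direction, so the term $\frac{\partial V}{\partial\nu}\frac{H_o}{V}$ in the monotonicity condition \eqref{eq-conv3} has the wrong sign and the constants entering ($\dagger$), which presuppose $\overline m\geq|\overline Q|$, are unavailable; condition ($\dagger\dagger$) is the replacement under which Proposition \ref{prop-cond2} re-establishes solvability of \eqref{eq-prescR} and the mass monotonicity. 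With these two corrections your argument coincides with the proof in the paper.
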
 	

	\begin{thm} \label{thm-QLPenrose-C}
	Let $(\Omega,g)$ be a compact manifold that is topologically a cylinder with boundary components, $\S_H$ and $\S$; a horizon $(H=0)$ component and an outer component with $H>0$, respectively. Let $E$ be a vector field on $\Omega$ representing the electric field, satisfying $R(g)\geq2|E|^2$, $Q(\S_H)\geq0$, and $\nabla\cdot E\geq 0$. Assume that $\S$ isometrically embeds in some Reissner--Nordstr\"om manifold with mean curvature $H_o$ such that one of the following holds
		\begin{enumerate}[A.]
		\item the convexity condition \emph{($\dagger$)} and $\overline m > 0$, or
		\item the convexity condition \emph{($\dagger\dagger$)} and $\overline m=0$,
		\end{enumerate}
	and assume in addition that
	\[
	\overline\Phi \geq  \Phi
	\]
	where $\overline \Phi$ is the electric flux in the Reissner--Nordstr\"om manifold. Then
	\be \label{eq-penroseC}
	\frac{1}{8\pi}\int_{\S}V(H_o-H)\,d\S\geq \( \frac{|\S_H|}{16\pi} \)^{1/2}\( 1+\frac{4\pi Q(\S_H)^2}{|\S_H|}\)-\overline m
	\ee 
	where $V$ is the static potential for the Reissner--Nordstr\"om manifold and $\overline m$ is its mass.
\end{thm}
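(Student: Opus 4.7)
\smallskip

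\noindent\textbf{Proof proposal.} The strategy follows the template of Lu--Miao \cite{LM} and the first-named author \cite{Chen18}, adapted to the charged setting: I construct a charged asymptotically flat extension of $(\Omega,g,E)$ through $\S$ whose ADM mass is bounded above by $\overline m+\frac{1}{8\pi}\int_\S V(H_o-H)\,d\S$, and then apply Theorem~\ref{thm-penrose-corners-C} to the resulting charged manifold admitting a corner along $\S$ to obtain a matching lower bound in terms of $|\S_H|$ and $Q(\S_H)$. Combining the two estimates yields \eqref{eq-penroseC}, and the flexibility of having \emph{three} corner versions of the charged Penrose inequality is exactly what makes the sign choices in each of Theorems~\ref{thm-QLPenrose-A}--\ref{thm-QLPenrose-C} work.

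For the extension, I identify $\S$ with its isometric image in the Reissner--Nordstr\"om reference manifold. Using whichever convexity condition is in force (the assumption $(\dagger)$ with $\overline m>0$, or $(\dagger\dagger)$ with $\overline m=0$), the exterior of this image admits a smooth foliation $\{\S_s\}_{s\ge 0}$ with $\S_0=\S$ by level sets of the static potential. On this exterior I write a warped metric $\tilde g=u^2\,ds^2+g_{\S_s}$ and prescribe the conformal factor $u$ to solve the first-order parabolic ODE in $s$ obtained from the scalar curvature relation $R(\tilde g)\ge 2|\tilde E|^2$, with the initial condition $u|_{\S}=H_o/H$, so that the mean curvature of $\S$ in the extension equals $H$. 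I take $\tilde E$ to be the Reissner--Nordstr\"om electric field on the extension, so $\nabla\cdot \tilde E=0$ there. The analysis of \cite{Chen18} then provides a global solution of the ODE with $u\to 1$ at infinity, asymptotic flatness of $(\tilde M,\tilde g)$, and the Shi--Tam type monotonicity
\begin{equation*}
\m_{ADM}(\tilde g)\le \overline m+\frac{1}{8\pi}\int_\S V(H_o-H)\,d\S.
\end{equation*}

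Gluing this extension to $\Omega$ along $\S$ produces a manifold whose metric is smooth across $\S$ (because $u|_\S=H_o/H$), but whose electric field generally has a jump $\overline \Phi - \Phi$ at $\S$; thus the result is a charged manifold admitting corners in the sense of the paper. I now verify the hypotheses of Theorem~\ref{thm-penrose-corners-C}: $H_-=H_+$ trivially satisfies $H_-\ge H_+$; $\Phi_+=\overline\Phi\ge\Phi=\Phi_-$ by assumption; $R\ge 2|E|^2$ holds on $\Omega$ by hypothesis and on the extension by construction; $\nabla\cdot E\ge 0$ holds on $\Omega$ by hypothesis and $\nabla\cdot\tilde E=0$ on the extension; the outer-most minimal surface remains $\S_H$ (since the extension is foliated by surfaces of positive mean curvature, no new minimal surfaces arise there); and $Q_H=Q(\S_H)\ge 0$ is given. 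Applying Theorem~\ref{thm-penrose-corners-C} therefore gives
\begin{equation*}
\m_{ADM}(\tilde g)\ge\left(\frac{|\S_H|}{16\pi}\right)^{1/2}\left(1+\frac{4\pi Q(\S_H)^2}{|\S_H|}\right),
\end{equation*}
and combining with the upper bound on $\m_{ADM}(\tilde g)$ yields \eqref{eq-penroseC}. The main technical point is to confirm that the \cite{Chen18} monotonicity carries over with no degradation when the target scalar curvature is the \emph{inequality} $R(\tilde g)\ge 2|\tilde E|^2$ rather than $R(\tilde g)=0$, and to confirm that the outward-foliating surfaces of positive mean curvature in the extension rule out new minimal surfaces in the glued manifold---these two facts are what allow the Penrose corner inequality of Theorem~\ref{thm-penrose-corners-C} to be invoked cleanly with $Q_H$ equal to the charge of $\S_H$ computed inside $\Omega$.
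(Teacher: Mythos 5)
Your proposal follows essentially the same route as the paper: build the charged Shi--Tam type extension of \cite{Chen18} over the normal-flow foliation of the exterior of the embedded image, impose $u|_\S = H_o/H$ so the mean curvatures match, glue to form a charged manifold with corners, apply Theorem~\ref{thm-penrose-corners-C}, and close with the monotonicity of $\int_{\S_s}VH_o(1-\tfrac1u)\,d\S_s$. The logic is sound, but three details are stated imprecisely. First, the glued metric is \emph{not} smooth across $\S$: matching $H_-=H_+$ does not match the full second fundamental forms, so the configuration is genuinely a corner with $H_-=H_+$ (which of course still satisfies $H_-\ge H_+$, so your subsequent application of the corner theorem is unaffected). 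Second, taking $\widetilde E$ to be literally the Reissner--Nordstr\"om field $\overline E$ does not work with respect to the warped metric $\widetilde g=u^2ds^2+\sigma_s$: one must rescale, $\widetilde E=\tfrac1u(\overline E^s\partial_s+\overline E^T)$, in order that $\nabla_{\widetilde g}\cdot\widetilde E=0$, that $2|\widetilde E|^2_{\widetilde g}$ equals the prescribed scalar curvature $\overline R+(u^{-2}-1)T$, and crucially that the flux through the corner satisfies $\widetilde\Phi=\overline\Phi$ (with the unrescaled field one gets $u\overline\Phi$, which would break the comparison $\Phi_+\ge\Phi_-$). Third, the foliation is the unit normal flow from $\S_0$, not the level sets of the static potential, which for a general embedded $\S_0$ are unrelated round spheres. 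With these corrections your argument coincides with the paper's proof, which simply runs the proof of Theorem~\ref{thm-QLPenrose-B} with Theorem~\ref{thm-penrose-corners-C} in place of Theorem~\ref{thm-penrose-corners-B}, the divergence theorem on $\Omega$ then giving the charge of $\S_H$ rather than the charge at infinity.
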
 	

The quantity given on the right-hand side of \eqref{eq-penroseC} is the quasi-local mass with respect to a static reference manifold, discussed above. In this instance, we use a Reissner--Nordstr\"om manifold as the reference manifold. This quantity goes by several names in the literature, such as a weighted Brown--York mass integral or the Wang--Yau energy with respect to a static reference. One may view this quantity as somehow measuring how far a domain deviates from the given reference manifold, since this quasi-local mass trivially yields zero if the surface lies in the reference manifold and the isometric embedding is the identity map. Note that if the reference manifold has non-zero mass, then in general the quasi-local mass defined with respect to it will not be positive. However, we would like to remark that this is not the case when the reference manifold is taken to be a Reissner--Nordstr\"om manifold with zero mass. One can see from \eqref{eq-penroseC} that when $\overline m=0$, the quasi-local mass not only is positive, but the lower bound is exactly the lower bound for the charged Riemannian Penrose inequality. Furthermore, when $\overline m=0$, \eqref{eq-penroseC} is in fact a strict inequality, analogous to the quasi-local Penrose inequality for the Brown--York mass (see Remark \ref{rem-equality}). 

The outline of this article is as follows. Section \ref{S-Setup} briefly recalls some standard definitions and the specific versions of the charged Riemannian Penrose inequalities that we will make use of. In Section \ref{S-corners} we prove Theorems \ref{thm-penrose-corners-A}, \ref{thm-penrose-corners-B} and \ref{thm-penrose-corners-C}. Then Section \ref{S-Extensions} describes a charged Shi--Tam type of extension following \cite{Chen18}, which we use in Section \ref{S-proof} to finally prove Theorems \ref{thm-QLPenrose-A}, \ref{thm-QLPenrose-B} and \ref{thm-QLPenrose-C}.

\medskip

\noindent {\bf Acknowledgements.} This work was commenced while both of the authors were visiting Institut Mittag-Leffler in Djursholm, which is supported by the Swedish Research Council grant no. 2016-06596. The authors would like to thank Institut Mittag-Leffler for their hospitality during this time. P. Chen is supported by the Simons Foundation collaboration grant \#584785.

	\section{Setup and definitions}\label{S-Setup}
	In this section, we recall some standard definitions and explicitly give the versions of the charged Riemannian Penrose inequality with charged matter that we will need in what follows.
	
	A time-symmetric, charged initial data set consists of the triple $(M,g,E)$ where $(M,g)$ is assumed to be a Riemannian $3$-manifold and $E$ is a vector field representing the electric field. Generally one also imposes the dominant energy condition on such initial data, which in the time-symmetric case is simply the condition
	\be
	R(g) \ge 2 |E|^2
	\ee
	where $R(g)$ is the scalar curvature of $g$. Physically, this condition corresponds to the assumption that any physical matter has non-negative local energy density. Throughout, we are interested in asymptotically flat initial data sets, which physically represent isolated gravitating systems. We say $(M,g,E)$ is asymptotically flat (of order $\rho$, with one end) if:

	\begin{enumerate}
		\item $M$, after removing a compact set, is diffeomorphic to $\R^3$ minus the closed unit ball;
		\item $R(g),\nabla\cdot(E)\in L^1(M)$;
		\item in the Euclidean coordinates near infinity given by the diffeomorphism we have $|g-\delta|+|x|\,|\p g|+|x|^2\,|\p^2 g|=O(|x|^{\rho})$;
		\item and, $|E|+|x|\,|\p E|=O(|x|^{\rho-1})$
	\end{enumerate}
	where $\rho\in(-\frac12,-1]$. This is precisely what is required for the ADM mass and total electric charge to be well-defined. In the standard rectangular Cartesian coordinates near infinity, the ADM mass can be expressed as
	\be 
		\m_{ADM}=\lim\limits_{r\to\infty}\frac{1}{16\pi}\int_{\S_r}\(\p_i g_{ij}-\p_ig_{ij}\)\nu^j\,d\S,
	\ee 
	where the limit is taken over large coordinate spheres $\S_r$, $\nu^j$ is the unit normal, and repeated indices are summed over. Given any closed 2-surface $\Sigma$, the charge enclosed by $\Sigma$, denoted $Q(\Sigma)$, is defined by the flux integral
	\be
	Q(\Sigma) = \frac{1}{4 \pi} \int_{\Sigma} E_i \nu^i \, d\S = \frac{1}{4 \pi} \int_{\Sigma} \Phi \, d\S.
	\ee
	The total charge, $Q_\infty$ then can be expressed as
	\be 
	Q_\infty = \lim\limits_{r\to\infty}\frac{1}{4 \pi} \int_{\S_r} \Phi \, d\S_r.
	\ee 
	
We are now ready to state the three versions of the charged Riemannian Penrose inequality that we require.
	
	\begin{thm}[\cite{KWY-exts-2015,Mc19}]\label{thm-A}
		Let $(M,g,E)$ be a charged asymptotically flat 3-manifold satisfying $R(g)\geq 2|E|^2+4|\nabla\cdot E|$, containing an outermost minimal surface $\S$, and assume that $\nabla\cdot E$ is compactly supported. Then we have
			\be \label{eq-A}
			\m_{ADM}\geq \(\frac{|\S|}{16\pi}\)^{\frac12}\(1+\frac{4\pi Q_\infty^2}{ |\S|}\).
			\ee
		Furthermore, equality holds if and only if $(M,g,E)$ is a Reissner--Nordstr\"om manifold.
	\end{thm}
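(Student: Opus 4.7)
My plan is to prove Theorem \ref{thm-A} by flow methods, combining the charged weak inverse mean curvature flow (IMCF) of Huisken--Ilmanen and Jang in the connected-horizon case with the charged conformal flow of Bray (as adapted to include electric charge by Khuri--Weinstein--Yamada) in the disconnected case. The central insight is that the strengthened energy condition $R(g) \geq 2|E|^2 + 4|\nabla\cdot E|$ is precisely what is needed to replace the usual divergence-free hypothesis on $E$ in both flow-monotonicity arguments.

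In the connected case, let $\{\S_t\}_{t \geq 0}$ denote the weak IMCF emanating from $\S$. The natural quantity to monitor is a charged Hawking-type mass
\begin{equation*}
m_H(\S_t) := \sqrt{\tfrac{|\S_t|}{16\pi}}\left(1 - \tfrac{1}{16\pi}\int_{\S_t} H^2\, d\S + \tfrac{4\pi Q_\infty^2}{|\S_t|}\right),
\end{equation*}
where I use $Q_\infty$ rather than the enclosed charge $Q(\S_t)$ (these agree for sufficiently large $t$ since $\nabla\cdot E$ is compactly supported). Combining Geroch's scalar-curvature computation with the evolution identity $\tfrac{d}{dt}Q(\S_t) = \tfrac{1}{4\pi}\int_{\S_t} H^{-1}(\nabla\cdot E)\, d\S$ and a Cauchy--Schwarz estimate on the matter/charge contributions, I expect to show $\tfrac{d}{dt} m_H(\S_t) \geq 0$ precisely when $R \geq 2|E|^2 + 4|\nabla\cdot E|$, with the coefficient $4$ being exactly what closes the estimate. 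Evaluating at $t = 0$ uses $H \equiv 0$ to give $m_H(\S) = \sqrt{|\S|/16\pi}(1 + 4\pi Q_\infty^2/|\S|)$, while $m_H(\S_t) \to \m_{ADM}$ as $t \to \infty$ yields the inequality.

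For a disconnected outermost minimal surface, weak IMCF need not drive $\S_t$ to a large coordinate sphere, so instead one employs a conformally deformed family of metrics along which a mass functional is monotone and the boundary area is non-decreasing, in the spirit of \cite{KWY-2017}. The $4|\nabla\cdot E|$ term again compensates for the failure of $\nabla\cdot E = 0$ in the monotonicity computation. For rigidity, saturation of the Cauchy--Schwarz inequalities in the Geroch derivative forces each $\S_t$ to be umbilic with vanishing traceless second fundamental form, $R(g) = 2|E|^2$ and $\nabla\cdot E = 0$ pointwise, and $E$ to be of Coulomb form; these together force the ambient geometry to be Reissner--Nordstr\"om.

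The principal obstacle is the monotonicity calculation itself: one must extract an inequality of the form
\begin{equation*}
\tfrac{d}{dt} m_H(\S_t) \geq \tfrac{1}{16\pi}\sqrt{\tfrac{|\S_t|}{16\pi}} \int_{\S_t} \tfrac{1}{H}\left(R - 2|E|^2 - 4|\nabla\cdot E|\right)\, d\S + (\text{other non-negative terms})
\end{equation*}
from the joint $t$-derivative of the four pieces of $m_H(\S_t)$. This is where the coefficient $4$ appears sharply: it arises from applying Cauchy--Schwarz to the flux integral in $\tfrac{d}{dt}Q(\S_t)^2$ against the $H^{-1}$-weighted gradient terms already present in Geroch's argument. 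A secondary subtlety is justifying the monotonicity of $m_H(\S_t)$ at small $t$, where $Q(\S_t) \ne Q_\infty$ in general --- this requires showing that replacing $Q(\S_t)$ by $Q_\infty$ only strengthens the monotone quantity, which again draws on the compact support of $\nabla\cdot E$.
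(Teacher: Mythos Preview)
The paper does not prove Theorem~\ref{thm-A}; it is quoted as a known result from \cite{KWY-exts-2015} (with a correction from \cite{Mc19}) and accompanied only by a remark on the hypothesis. So there is no proof in the paper to compare your proposal against. That said, the proof in the cited references proceeds entirely via Bray's charged conformal flow, which is what you invoke for the disconnected case; there is no separate IMCF argument for a connected horizon. Your proposal therefore diverges from the literature precisely in the connected case.

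There is a genuine gap in your IMCF argument. If you use $Q(\S_t)$ in the charged Hawking mass, its $t$-derivative produces the term
\[
\frac{2Q(\S_t)}{|\S_t|}\int_{\S_t}\frac{\nabla\cdot E}{H}\,d\S,
\]
and the weight $1/H$ is the obstruction: the enhanced energy condition contributes $\frac{1}{4\pi}\int_{\S_t}|\nabla\cdot E|$ to the Geroch computation, which does \emph{not} dominate $\frac{2|Q(\S_t)|}{|\S_t|}\int_{\S_t}\frac{|\nabla\cdot E|}{H}$ without a pointwise lower bound on $H$ of the form $H\geq 8\pi|Q|/|\S_t|$, which you do not have. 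The coefficient $4$ is calibrated to the conformal-flow machinery (where divergence transforms as $\nabla_{\widetilde g}\cdot\widetilde E=u^{-6}\nabla_g\cdot E$ and the scalar-curvature identity carries the factor $8$), not to a Cauchy--Schwarz closure of the Geroch derivative. Your alternative, taking $Q_\infty$ constant in $m_H$, trades this problem for another: the Cauchy--Schwarz step $\int|E|^2\geq (4\pi Q(\S_t))^2/|\S_t|$ yields $Q(\S_t)^2$, not $Q_\infty^2$, so monotonicity fails on $\{t:\S_t\text{ meets }\operatorname{supp}(\nabla\cdot E)\}$; the assertion that ``replacing $Q(\S_t)$ by $Q_\infty$ only strengthens the monotone quantity'' is not justified and in general is false, since $|Q(\S_t)|$ can lie on either side of $|Q_\infty|$. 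This is exactly why the paper's Theorems~\ref{thm-B} and~\ref{thm-C}, which \emph{are} IMCF-based, impose a sign condition on $\nabla\cdot E$ rather than the enhanced energy condition: the sign makes the offending term non-negative and lets one drop it, whereas the enhanced condition does not.

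In short: your disconnected-case outline aligns with the actual proof; your connected-case IMCF route does not close, and the cited references do not attempt it.
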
 
\begin{remark}
	Theorem \ref{thm-A} is essentially Theorem 1.3 of \cite{KWY-exts-2015}, after applying a minor correction, as discussed in Section 3.3 of \cite{Mc19}. The correction relates to the hypothesis $R(g)\geq 2|E|^2+4|\nabla\cdot E|$, which is stronger than the usual dominant energy condition, sometimes called the enhanced dominant energy condition.
\end{remark}

		\begin{thm}[\cite{Jang79,Mc19}]\label{thm-B}
		Let $(M,g,E)$ be a charged asymptotically flat 3-manifold satisfying the dominant energy condition, containing a connected outermost minimal surface $\S$, and assume that exterior to $\S$ it holds that $Q_\infty\nabla\cdot E\leq0$.

		Then
		\be \label{eq-B}
		\m_{ADM}\geq \(\frac{|\S|}{16\pi}\)^{\frac12}\(1+\frac{4\pi Q_\infty^2}{ |\S|}\).
		\ee 
		Furthermore, equality holds if and only if $(M,g,E)$ is a Reissner--Nordstr\"om manifold.
	\end{thm}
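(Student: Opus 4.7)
The plan is to apply the weak inverse mean curvature flow (IMCF) of Huisken--Ilmanen, starting from the connected outermost minimal surface $\Sigma$, and to establish monotonicity along the flow of the charged Hawking mass
\[
m_H^{ch}(\Sigma_t) = \sqrt{\frac{|\Sigma_t|}{16\pi}}\left(1 - \frac{1}{16\pi}\int_{\Sigma_t} H^2\,d\Sigma + \frac{4\pi Q(\Sigma_t)^2}{|\Sigma_t|}\right).
\]
Because $\Sigma$ is connected and outer-minimizing, the weak solution produces a family $\{\Sigma_t\}_{t\geq 0}$ with $|\Sigma_t|=|\Sigma|\,e^{t}$ whose members are topological spheres, so that Gauss--Bonnet yields $\int_{\Sigma_t} K\,d\Sigma = 4\pi$; this is the crucial place where the connectedness hypothesis is used.

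The main step is a Geroch-type monotonicity computation for $m_H^{ch}$. The derivative of the classical portion produces the familiar non-negative integrand $\frac{2|\nabla H|^2}{H^2} + \frac{1}{2}(\lambda_1-\lambda_2)^2$ together with an $R-2K$ contribution; Gauss--Bonnet absorbs the $-2K$, and the dominant energy condition $R\geq 2|E|^2$ converts the $R$-term into an $|E|^2$ contribution. The new feature is the derivative of the charge term: the divergence theorem gives $Q'(t)=\frac{1}{4\pi}\int_{\Sigma_t}\frac{\nabla\cdot E}{H}\,d\Sigma$, while the area growth $|\Sigma_t|'=|\Sigma_t|$ produces an explicit negative contribution of order $Q^2/|\Sigma_t|^{1/2}$. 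Applying Cauchy--Schwarz in the form $Q(\Sigma_t)^2 \leq \frac{|\Sigma_t|}{(4\pi)^2}\int_{\Sigma_t}|E|^2\,d\Sigma$ shows that this negative contribution can be balanced against the $|E|^2$ term coming from the dominant energy condition, so that $\frac{d}{dt}m_H^{ch}(\Sigma_t)\geq 0$.

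To conclude, I evaluate at the endpoints. At $t=0$, $H\equiv 0$ gives $m_H^{ch}(\Sigma_0)=\sqrt{|\Sigma|/(16\pi)}(1+4\pi Q(\Sigma)^2/|\Sigma|)$; as $t\to\infty$, Huisken--Ilmanen's blow-down argument together with the decay of $E$ gives $m_H^{ch}(\Sigma_t)\to\m_{ADM}$. Monotonicity thus yields $\m_{ADM}\geq \sqrt{|\Sigma|/(16\pi)}(1+4\pi Q(\Sigma)^2/|\Sigma|)$, which is the desired inequality but with $Q(\Sigma)$ rather than $Q_\infty$. To convert, I apply the divergence theorem exterior to $\Sigma$: $4\pi(Q_\infty-Q(\Sigma)) = \int_{M\setminus\Omega}\nabla\cdot E$. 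Multiplying through by $Q_\infty$ and using the sign hypothesis $Q_\infty\nabla\cdot E\leq 0$ gives $Q_\infty^2\leq Q_\infty Q(\Sigma)$, hence $Q_\infty^2\leq Q(\Sigma)^2$, which strengthens the lower bound to the stated form.

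The main obstacle I expect is making monotonicity rigorous for the weak IMCF, since the flow can jump across outer-minimizing hulls; here one follows Huisken--Ilmanen's exponential regularization and verifies separately that $m_H^{ch}$ does not decrease across jumps (on the jump set the weak mean curvature vanishes, and the enclosed charge transitions favorably in view of the sign hypothesis). For rigidity, tracing back through the Geroch integrand and the Cauchy--Schwarz step forces each $\Sigma_t$ to be totally umbilic with constant $H$, and $E = \Phi\nu$ with $\Phi$ constant on each leaf; this reconstructs a warped-product structure that one identifies with a Reissner--Nordstr\"om manifold.
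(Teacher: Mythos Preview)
The paper does not supply a proof of this theorem; it is quoted from \cite{Jang79,Mc19} and used as a black box. Your overall strategy --- run weak IMCF from the connected outermost minimal surface and exploit monotonicity of a charged Hawking mass --- is exactly the Jang--Huisken--Ilmanen approach underlying those references, so at the level of method you are aligned with the literature.

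There is, however, a genuine gap in your implementation. You work with the charged Hawking mass carrying the \emph{varying} charge $Q(\Sigma_t)$, establish monotonicity, read off the bound with $Q(\Sigma)$ at $t=0$, and only then convert $Q(\Sigma)^2\ge Q_\infty^2$ via the sign hypothesis. The problem is the monotonicity step itself: differentiating the charge term produces, in addition to the area-growth contribution you discuss, a term proportional to $Q(\Sigma_t)\,Q'(\Sigma_t)$ with $Q'(\Sigma_t)=\frac{1}{4\pi}\int_{\Sigma_t}\frac{\nabla\cdot E}{H}$. Under the hypothesis $Q_\infty\nabla\cdot E\le 0$ (say $Q_\infty>0$, so $\nabla\cdot E\le 0$) one has $Q(\Sigma_t)\ge Q_\infty>0$ and $Q'(\Sigma_t)\le 0$, so this term is \emph{non-positive} and is not absorbed by any of the remaining terms. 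Your Cauchy--Schwarz step only handles the area-growth piece; it does nothing for the $QQ'$ contribution. Consequently the monotonicity of $m_H^{ch}(\Sigma_t)$ with running charge fails under the Theorem~B sign condition (it is the Theorem~C sign condition $Q_\Sigma\nabla\cdot E\ge 0$ that makes $QQ'\ge 0$).

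The correct variant, and the one carried out in \cite{Mc19}, is to freeze the charge: use
\[
\widehat m(\Sigma_t)=\sqrt{\frac{|\Sigma_t|}{16\pi}}\Big(1-\frac{1}{16\pi}\int_{\Sigma_t}H^2 + \frac{4\pi\,Q_\infty^2}{|\Sigma_t|}\Big)
\]
with the constant $Q_\infty$. Then no $Q'$ term appears. The only place the sign hypothesis enters is in the Cauchy--Schwarz step, where one needs $Q_\infty^2\le Q(\Sigma_t)^2$ for every $t$; this is exactly what $Q_\infty\nabla\cdot E\le 0$ delivers, since $Q_\infty\big(Q_\infty-Q(\Sigma_t)\big)=\frac{Q_\infty}{4\pi}\int_{\text{ext}(\Sigma_t)}\nabla\cdot E\le 0$ gives $|Q_\infty|\le |Q(\Sigma_t)|$. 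With this modification the argument closes, the bound comes out directly with $Q_\infty$, and no end-conversion is needed. Your treatment of the weak-flow jumps and of rigidity is otherwise on the right track.
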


\begin{thm}[\cite{Jang79,Mc19}]\label{thm-C}
	Let $(M,g,E)$ be a charged asymptotically flat 3-manifold satisfying the dominant energy condition, containing a connected outermost minimal surface $\S$, and assume that exterior to $\S$ it holds that $Q_\S\nabla\cdot E\geq0$.
	
	Then
		\be \label{eq-C}
		\m_{ADM}\geq \(\frac{|\S|}{16\pi}\)^{\frac12}\(1+\frac{4\pi Q_\S^2}{ |\S|}\),
		\ee 
		where $Q_\S=Q(\S)$ is the charge of $\S$. Furthermore, equality holds if and only if $(M,g,E)$ is a Reissner--Nordstr\"om manifold.
	\end{thm}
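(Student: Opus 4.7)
The strategy is Jang's argument \cite{Jang79} implemented via the Huisken--Ilmanen weak inverse mean curvature flow, following \cite{Mc19}. Since $\S$ is the connected outermost minimal surface, launch weak IMCF $\{\S_t\}_{t\geq 0}$ from $\S$, so that $|\S_t|=e^t|\S|$ and $\S_t$ evolves as the boundary of a minimizing hull. Define the charged Hawking mass
\[
m_H^Q(\S_t) := \sqrt{\frac{|\S_t|}{16\pi}}\left(1 + \frac{4\pi Q(\S_t)^2}{|\S_t|} - \frac{1}{16\pi}\int_{\S_t} H^2\, dA\right).
\]
At $t=0$, $H \equiv 0$ and $Q(\S_0)=Q_\S$, so $m_H^Q(\S)$ equals the right-hand side of \eqref{eq-C}.

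The central step is to show $\frac{d}{dt} m_H^Q(\S_t) \geq 0$ in the weak sense. The classical Geroch identity along smooth IMCF of a topological sphere yields
\[
\frac{d m_H}{dt}(\S_t) = \frac{1}{16\pi}\sqrt{\frac{|\S_t|}{16\pi}}\int_{\S_t}\left[\frac{2|\nabla H|^2}{H^2} + \tfrac{1}{2}(\lambda_1-\lambda_2)^2 + R\right]dA,
\]
where $\lambda_1, \lambda_2$ are the principal curvatures of $\S_t$. The dominant energy condition $R\geq 2|E|^2 \geq 2\Phi^2$ combined with Cauchy--Schwarz, $(4\pi Q(\S_t))^2 = (\int_{\S_t}\Phi\, dA)^2 \leq |\S_t|\int_{\S_t}\Phi^2\, dA$, gives
\[
\frac{1}{16\pi}\sqrt{\frac{|\S_t|}{16\pi}}\int_{\S_t} R\, dA \;\geq\; \tfrac{1}{2}Q(\S_t)^2\sqrt{\pi/|\S_t|}.
\]
On the other hand, $\frac{d}{dt}|\S_t|=|\S_t|$ and the coarea identity $\frac{dQ}{dt} = \frac{1}{4\pi}\int_{\S_t}\frac{\nabla\cdot E}{H}\, dA$ give
\[
\frac{d}{dt}\left(Q(\S_t)^2\sqrt{\pi/|\S_t|}\right) = 2Q(\S_t)\frac{dQ}{dt}\sqrt{\pi/|\S_t|} - \tfrac{1}{2}Q(\S_t)^2\sqrt{\pi/|\S_t|},
\]
and the $-\tfrac12 Q^2$ piece cancels exactly against the lower bound above. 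The residue is
\[
\frac{d}{dt}m_H^Q(\S_t) \;\geq\; 2Q(\S_t)\frac{dQ}{dt}\sqrt{\pi/|\S_t|},
\]
and under the hypothesis $Q_\S\,\nabla\cdot E \geq 0$ the sign of $Q(\S_t)$ is preserved along the flow and matches the sign of $\nabla\cdot E$, so this cross term is non-negative.

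Taking $t\to\infty$, the Huisken--Ilmanen blowdown analysis gives $\lim_{t\to\infty}m_H(\S_t) \leq \m_{ADM}$, while the added charge correction
\[
\sqrt{\tfrac{|\S_t|}{16\pi}}\cdot\tfrac{4\pi Q(\S_t)^2}{|\S_t|} = Q(\S_t)^2\sqrt{\pi/|\S_t|} \longrightarrow 0,
\]
since $Q(\S_t)\to Q_\infty$ is finite and $|\S_t|\to\infty$. Together with monotonicity this yields $\m_{ADM}\geq m_H^Q(\S)$, which is \eqref{eq-C}. Rigidity comes from saturating each inequality in the monotonicity: $R=2|E|^2$, $\lambda_1=\lambda_2$, $|\nabla H|=0$, $\Phi$ constant on each slice with $|E^\top|=0$, and $\nabla\cdot E = 0$. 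These constraints force the exterior to be a spherically symmetric, static, purely charged vacuum solution, i.e., Reissner--Nordstr\"om. The main technical obstacle is the weak-formulation upgrade: Huisken--Ilmanen's monotonicity of $m_H$ persists across the minimizing-hull jump discontinuities of $\S_t$, and one must verify analogous stability for the added charge term $Q(\S_t)^2/\sqrt{|\S_t|}$. This should follow from applying the divergence theorem between $\S_t$ and its hull together with the sign hypothesis on $\nabla\cdot E$ to control the change of $Q$ across the jump.
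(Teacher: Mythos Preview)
The paper does not give its own proof of Theorem~\ref{thm-C}; it is quoted from \cite{Jang79,Mc19} in Section~\ref{S-Setup} as background input, and then \emph{applied} (in the proof of Theorem~\ref{thm-penrose-corners-C}) to the smoothed approximating data. So there is no in-paper argument to compare against.

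That said, your sketch is exactly the argument behind the cited references: Jang's charged Hawking mass monotonicity, made rigorous via Huisken--Ilmanen weak IMCF, with the sign hypothesis $Q_\S\,\nabla\cdot E\ge 0$ used to ensure $Q(\S_t)\tfrac{dQ}{dt}\ge 0$ along the flow. The computation you give (Cauchy--Schwarz on $\int\Phi$, the exact cancellation of the $-\tfrac12 Q^2\sqrt{\pi/|\S_t|}$ term against the lower bound coming from $R\ge 2\Phi^2$, and the vanishing of the charge correction at infinity) is correct in the smooth regime. The one part you flag but do not carry out---stability of the charged monotonicity across the minimizing-hull jumps---is genuinely the technical heart and is handled in \cite{Mc19}; ``should follow'' is not quite a proof, but you have correctly identified both the mechanism (divergence theorem between $\S_t$ and its strictly minimizing hull, using the sign of $\nabla\cdot E$) and that this is where the work lies.
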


\section{The charged Riemannian Penrose inequality with corners}\label{S-corners}
	In this section, we recall Miao's corner-smoothing technique that allows one to smooth out Riemannian metrics while preserving non-negativity of scalar curvature. In addition to smoothing the metric, we simultaneously smooth the electric field following the method by Alaee, Khuri and Yau in \cite{AKY} to ensure that the hypotheses of the charged Riemannian Penrose inequality hold. The condition $H_-\geq H_+$ imposed on the corner amounts to insisting that the scalar curvature does not have a sharp drop in the distributional sense; that is, the charged dominant energy condition is preserved in a distributional sense along the corner. Similarly, the quantity $\Phi_+-\Phi_-$ having an appropriate sign corresponds to preserving the sign of $\nabla\cdot E$ distributionally across the corner. In order to preserve the energy condition required in Theorem \ref{thm-A}, one instead can ask that the quantities at the corner satisfy $H_-  -  H_+  \ge  2 \left | \Phi_+  -  \Phi_- \right |$.
	
	Denote by $(M, g_-,g_+,E_-,E_+)$ the charged manifold with corners that we wish to smooth out. To achieve this, we first smooth out the metric using a standard mollifier as in Proposition 3.1 of \cite{Miao02} to obtain a family of $C^2$ metrics $g_\delta$ on $M$ that is isometric to the metric admitting corners outside of a neighbourhood $\Sigma \times (-\delta , \delta)$ of $\S$. In the Gaussian neighbourhood $\Sigma \times (-\delta , \delta)$, the metric is of the form
	\be 
	g_\delta=\sigma_\delta(s)_{ab}dx^adx^b+ds^2
	\ee
	where $\sigma_\delta$ denotes the metrics on each leaf of constant $s$. We then smooth out the vector field $E$ as in Lemma 5.1 of \cite{AKY}. The proofs of our different versions of the charged Riemannian Penrose inequality with corners are almost identical, with only minor differences. We focus first on proving the version of the inequality based on Theorem \ref{thm-A}, then explain the differences that lead us to Theorems \ref{thm-B} and \ref {thm-C}.

	\begin{lemma} \label{lemma_energy_condition}
		Assuming  $H_-  -  H_+  \ge  2 \left | \Phi_+  -  \Phi_- \right |$, then there exists a $C^{1,\alpha}$ electric field vector $E_\delta$ on $ M$ that is uniformly bounded independent of $\delta$ such that 
		\be
		\begin{split}\label{eq-weakeDEC}
			R(g_\delta) - 2 |E_\delta|^2_{g_\delta}  - 4 |\nabla_{g_\delta} \cdot E_\delta  | \ge & h_\delta \\
		\end{split}
		\ee
		where $h_\delta$ is uniformly bounded on $ M$ and vanishes outside of $\Sigma \times (-\delta , \delta)$. Furthermore $E_\delta$ is exactly equal to $E$ outside of $\Sigma \times (-\delta , \delta)$.
	\end{lemma}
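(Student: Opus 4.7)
The plan is to mollify the metric and the normal component of the electric field at the same scale $\delta$ and with matching profiles, so that the singular contributions to $R(g_\delta)$ and to $\nabla_{g_\delta}\cdot E_\delta$ can be compared pointwise using the hypothesis $H_- - H_+ \geq 2|\Phi_+ - \Phi_-|$. Outside the Gaussian tube $\S\times(-\delta,\delta)$ everything is unchanged and the desired inequality \eqref{eq-weakeDEC} reduces immediately to \eqref{eq-energy-condition-A}, so I focus the analysis on the tube.

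On the tube, $g_\delta = \sigma_\delta(s) + ds^2$, and the standard identity
\[
R(g_\delta) = R_{\sigma_\delta} - |\II_\delta|^2 - H_\delta^2 - 2\partial_s H_\delta
\]
holds, where $H_\delta$ is the mean curvature of the level sets of $s$. Miao's choice of mollifier for $\sigma$ produces a smooth, non-negative, compactly supported profile $\psi_\delta(s)$ of integral one, localized in $(-\delta,\delta)$, such that
\[
-2\partial_s H_\delta = 2(H_- - H_+)\psi_\delta(s) + O(1)
\]
uniformly in $\delta$, while $R_{\sigma_\delta}$, $|\II_\delta|^2$, and $H_\delta^2$ remain uniformly bounded on $M$.

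Following Lemma 5.1 of \cite{AKY}, I construct $E_\delta$ as follows. Decompose $E = \Phi\nu + E^\tau$ on the tube with $\nu = \partial_s$, and define
\[
\Phi_\delta(s,x) := \Phi_-(x) + \bigl(\Phi_+(x) - \Phi_-(x)\bigr)\int_{-\infty}^{s}\psi_\delta(t)\,dt,
\]
using the very same profile $\psi_\delta$ as above, together with any uniformly bounded $C^{1,\alpha}$ extension $E^\tau_\delta$ of $E^\tau_\pm$ agreeing with $E^\tau$ outside the tube (possible since $E^\tau_\pm$ are $C^{1,\alpha}$ up to $\S$). Setting $E_\delta := \Phi_\delta\,\nu + E^\tau_\delta$ yields the required $C^{1,\alpha}_{\mathrm{loc}}$, uniformly bounded vector field that equals $E$ outside $\S\times(-\delta,\delta)$. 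The Gaussian-coordinate divergence formula
\[
\nabla_{g_\delta}\cdot E_\delta = \partial_s\Phi_\delta + H_\delta\,\Phi_\delta + \Div_{\sigma_\delta}(E^\tau_\delta)
\]
then gives $\partial_s\Phi_\delta = (\Phi_+ - \Phi_-)\psi_\delta(s) + O(1)$, with the remaining two terms uniformly bounded.

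Combining the above in the tube,
\[
R(g_\delta) - 2|E_\delta|^2_{g_\delta} - 4|\nabla_{g_\delta}\cdot E_\delta| \geq 2\bigl[(H_- - H_+) - 2|\Phi_+ - \Phi_-|\bigr]\psi_\delta(s) - C,
\]
for a constant $C$ independent of $\delta$. The hypothesis $H_- - H_+ \geq 2|\Phi_+ - \Phi_-|$ makes the bracket non-negative pointwise, so setting $h_\delta := -C\,\chi_{\S\times(-\delta,\delta)}$ delivers \eqref{eq-weakeDEC} with $h_\delta$ uniformly bounded and vanishing outside the tube. The main obstacle I expect is precisely to arrange that both singular contributions share the \emph{same} profile $\psi_\delta$: this is what forces the explicit construction of $\Phi_\delta$ above rather than a plain $s$-convolution of $\Phi$, and it is the crucial design idea behind Lemma 5.1 of \cite{AKY}, converting the distributional inequality at the corner into a pointwise one after smoothing. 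All remaining estimates — uniform boundedness of $|E_\delta|_{g_\delta}$, of $R_{\sigma_\delta}$, $H_\delta$, $\II_\delta$, and of the $O(1)$ remainders — are routine and follow from the fact that $\sigma_\pm$ and $E_\pm$ are $C^{2,\alpha}$ and $C^{1,\alpha}$ up to the corner, respectively.
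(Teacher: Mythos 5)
Your argument is essentially the paper's proof: the paper simply quotes Miao's Proposition~3.1 for the singular profile $(H_--H_+)\tfrac{100}{\delta^2}\phi(100s/\delta^2)$ of $R(g_\delta)$ and Lemma~5.1 of \cite{AKY} for the matching profile $(\Phi_+-\Phi_-)\tfrac{50}{\delta^2}\phi(100s/\delta^2)$ of $\nabla_{g_\delta}\cdot E_\delta$, and then cancels the two singular parts pointwise using $H_--H_+\ge 2|\Phi_+-\Phi_-|$ exactly as you do, absorbing the $O(1)$ remainders into $h_\delta$. The only slip is cosmetic: as written, your $\Phi_\delta$ equals the corner value $\Phi_+(x)$ (rather than $\Phi(s,x)$) for $s>\delta$, so $E_\delta$ would not agree with $E$ outside the tube; interpolating the full function $s\mapsto\Phi(s,x)$ (equivalently, adding the smooth part of $\Phi$ back in, as in a plain $s$-mollification) fixes this and only changes the $O(1)$ remainder.
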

	\begin{proof}
		Let  $\phi\in C^{\infty}_{c}((-1,1))$ be the standard mollifier with $0\leq \phi(s)\leq 1$ and $\int_{-1}^1\phi ds=1$, used in the smoothing process defining $g_\delta$. It is shown in \cite{Miao02} that the scalar curvature of the smoothed metric satisfies
		\begin{equation}
		R(g_\delta)=O(1),\qquad (s,x)\in\left\{\frac{\delta^2}{100}<|s|\leq\frac{\delta}{2}\right\}\times \Sigma,
		\end{equation}
		\begin{equation}
		R(g_\delta)=O(1)+\left(H_-  -  H_+  \right)\frac{100}{\delta^2}
		\phi\left(\frac{100s}{\delta^2}\right),\qquad (s,x)\in\left[-\frac{\delta^2}{50},\frac{\delta^2}{50}\right]\times \Sigma,
		\end{equation}
		and by the construction in Lemma 5.1 of \cite{AKY}, the smoothed out vector field satisfies
		\begin{equation}
		\nabla_{g_\delta} \cdot E_\delta =O(1),\qquad (s,x)\in\left\{\frac{\delta^2}{50}<|s|\leq\frac{\delta}{2}\right\}\times \Sigma,
		\end{equation}
		and
		\begin{equation} \label{eq_div_estimate}
		\nabla_{g_\delta} \cdot E_\delta
		=\left(\Phi_+  -  \Phi_-\right)\frac{50}{\delta^2}
		\phi\left(\frac{100s}{\delta^2}\right)
		+O(1),\qquad (s,x)\in\left[-\frac{\delta^2}{50},\frac{\delta^2}{50}\right]\times \Sigma.
		\end{equation}
		One then sees that the condition $H_-  -  H_+  \ge  2 |  \Phi_+  -  \Phi_-  | \ge 0$ implies the conclusion, where $h_\delta$ arises from the $O(1)$ terms.
	\end{proof}

Note that in the above lemma, the smoothed initial data does not quite satisfy the energy condition that we wish to preserve. Nevertheless, we may perform a conformal change following \cite{Miao02,SY79} to obtain smoothed data that does indeed satisfy the energy condition. In what follows we will use the superscripts $+$ and $-$ to respectively denote the positive and negative part of a function. That is, for any function $f$ we write $f=f^+-f^-$ where $f^\pm\geq0$. The required conformal factor comes from solving an equation of the form
\be  \label{eq-SY-lemma}
	\left\{ \begin{array}{rrl}
	{\Delta}_{g} u -f^- u & = & 0 \\
	\lim_{x \rightarrow \infty} u & = & 1 \  ,
\end{array} 
\right.
\ee 
for some given $f$.

It follows from Lemma 3.2 of \cite{SY79} that \eqref{eq-SY-lemma} has a unique solution on an asymptotically flat manifold $M$ (without boundary) provided that $f^-$ is sufficiently small in $L^{3/2}(M)$. Furthermore, the solution has the expansion $f=1+\frac{A}{|x|}+O(|x|^{-2})$. For this reason, we reflect the manifold $ M$ across the minimal surface to obtain a manifold without boundary, $\widetilde M$.

In \cite{Miao02}, \eqref{eq-SY-lemma} was used to obtain an appropriate conformal factor, with $f^-$ taken to be proportional to the negative part of the scalar curvature $R(g_\delta)$. However, here we would like to make two different choices of $f^-$; one for each energy condition that we would like to preserve. Nevertheless, the proof of Proposition 4.1 of \cite{Miao02} shows that provided the $L^{6/5}$ and $L^3$ norms of $f^-$ are controlled by $\delta$ then the solution $u_\delta$ on $\widetilde{M}$ is uniformly close to $1$. This in turn implies that a metric of the form $\widetilde g_\delta:=u_\delta^4 g_\delta$ converges to $g$ in $C^0$ on $\widetilde{M}$ and in $C^2$ away from the corner. In addition, the proof of Lemma 4.2 of \cite{Miao02} then demonstrates that the ADM mass of $\widetilde g_\delta$ converges to the ADM mass of $g_+$.

From this, we obtain the following lemmas by making suitable choices of $f^-\in L^{6/5}(\widetilde M)\cap L^{3/2}(\widetilde M)\cap L^{3}(\widetilde M)$ to ensure that the conformally transformed metrics satisfy the energy conditions.
\begin{lemma}\label{lem-conf-changed-data}
	Assume $H_- - H_+\geq 2|\Phi_- - \Phi_+|$ and away from the corner \eqref{eq-energy-condition-A} holds. Then there exists a sequence of initial data $(\widetilde{M},\widetilde g_\delta, (1-\epsilon)^2\widetilde E_\delta)$ satisfying \eqref{eq-energy-condition-A}. Furthermore we have that $\widetilde g_\delta$ converges to $(g_-,g_+)$ in $C^0$ on $\widetilde{M}$ and $C^2$ away from the corner, and $\widetilde E_\delta$ converges to $(E_-,E_+)$ in $C^0$ on $ M$ and $C^1$ away from the corner.
\end{lemma}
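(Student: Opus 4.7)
The plan is to correct the scalar-curvature defect produced by the corner smoothing of Lemma~\ref{lemma_energy_condition} via a Schoen--Yau-type conformal rescaling, at the price of a small multiplicative slack $(1-\epsilon)^2$ in the electric field. Starting from Lemma~\ref{lemma_energy_condition}, we have data $(g_\delta, E_\delta)$ satisfying $R(g_\delta)\geq 2|E_\delta|^2_{g_\delta}+4|\nabla_{g_\delta}\cdot E_\delta|+h_\delta$, with $h_\delta$ uniformly bounded and supported in the $\delta$-neighbourhood of $\S$. I first reflect $(M,g_\delta)$ across the horizon $\S_H$ to obtain an asymptotically flat manifold $\widetilde M$ without boundary, on which Lemma~3.2 of~\cite{SY79} applies.

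Set $f_\delta^-:=\tfrac{1}{8}h_\delta^-$. Because $h_\delta$ is uniformly bounded and concentrated on a set of volume $O(\delta)$, the norms $\|f_\delta^-\|_{L^{6/5}}$, $\|f_\delta^-\|_{L^{3/2}}$ and $\|f_\delta^-\|_{L^3}$ all vanish as $\delta\to 0$, so \eqref{eq-SY-lemma} admits a unique solution $u_\delta$ for $\delta$ small. Since $\Delta_{g_\delta}u_\delta=f_\delta^- u_\delta\ge 0$ and $u_\delta\to 1$ at infinity, the maximum principle yields $0<u_\delta\le 1$, and the argument of Proposition~4.1 in~\cite{Miao02} shows $u_\delta\to 1$ uniformly on $\widetilde M$ and in $C^2_{\mathrm{loc}}$ away from the corner. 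I then set $\widetilde g_\delta:=u_\delta^4 g_\delta$ and, as a vector field, $\widetilde E_\delta^i:=u_\delta^{-6}E_\delta^i$; this choice is made precisely so that the divergence is conformally homogeneous, yielding $\nabla_{\widetilde g_\delta}\cdot\widetilde E_\delta=u_\delta^{-6}\nabla_{g_\delta}\cdot E_\delta$ and $|\widetilde E_\delta|^2_{\widetilde g_\delta}=u_\delta^{-8}|E_\delta|^2_{g_\delta}$.

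The conformal transformation law for scalar curvature together with the defining equation of $u_\delta$ then gives
\[
R(\widetilde g_\delta)=u_\delta^{-4}\bigl(R(g_\delta)-8f_\delta^-\bigr)=u_\delta^{-4}\bigl(R(g_\delta)-h_\delta^-\bigr)\ge u_\delta^{-4}\bigl(2|E_\delta|^2_{g_\delta}+4|\nabla_{g_\delta}\cdot E_\delta|\bigr),
\]
where the final inequality uses $h_\delta^+\ge 0$. Choosing $\epsilon=\epsilon(\delta)\to 0$ so that $u_\delta\ge 1-\epsilon$ on $\widetilde M$ (possible by uniform convergence), the inequalities $(1-\epsilon)^4 u_\delta^{-8}\le u_\delta^{-4}$ and $(1-\epsilon)^2 u_\delta^{-6}\le u_\delta^{-4}$ both reduce to $1-\epsilon\le u_\delta$, and combining with the display above gives
\[
R(\widetilde g_\delta)\ge 2\bigl|(1-\epsilon)^2\widetilde E_\delta\bigr|^2_{\widetilde g_\delta}+4\bigl|\nabla_{\widetilde g_\delta}\cdot\bigl((1-\epsilon)^2\widetilde E_\delta\bigr)\bigr|,
\]
which is precisely \eqref{eq-energy-condition-A} for the rescaled data. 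The stated convergence of $\widetilde g_\delta$ and $\widetilde E_\delta$ follows at once from $u_\delta\to 1$ in $C^0(\widetilde M)\cap C^2_{\mathrm{loc}}(\widetilde M\setminus\S)$ together with the convergence of $(g_\delta,E_\delta)$ to $(g_\pm,E_\pm)$.

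The main obstacle I expect is controlling the $L^p$-smallness of $f_\delta^-$, given that $R(g_\delta)$ and $\nabla_{g_\delta}\cdot E_\delta$ separately carry $1/\delta^2$ singularities near the corner. The hypothesis $H_- - H_+\ge 2|\Phi_+-\Phi_-|$, already exploited in the proof of Lemma~\ref{lemma_energy_condition}, is exactly what ensures that these singular contributions to $R(g_\delta)-2|E_\delta|^2-4|\nabla\cdot E_\delta|$ combine with the correct sign, so that $h_\delta^-$ retains only the $O(1)$ remainder whose $L^p$ norms decay with $\delta$.
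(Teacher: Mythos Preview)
Your overall strategy matches the paper's: smooth via Lemma~\ref{lemma_energy_condition}, reflect across $\S_H$ to get $\widetilde M$, solve a Schoen--Yau equation for a conformal factor, transform with $\widetilde E_\delta=u_\delta^{-6}E_\delta$ so that the divergence scales homogeneously, and use the $(1-\epsilon)$ slack to absorb the remaining discrepancy. A cosmetic difference is that you take the potential in \eqref{eq-SY-lemma} to be $\tfrac18 h_\delta^-$ directly, whereas the paper uses the negative part of $\tfrac18\bigl(R(g_\delta)-2|E_\delta|^2-4|\nabla\cdot E_\delta|\bigr)$; either choice is fine once the sign issue below is corrected.

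There is, however, a genuine sign error that invalidates your displayed inequality. From $\Delta_{g_\delta}u_\delta=f_\delta^- u_\delta$ you correctly obtain $R(\widetilde g_\delta)=u_\delta^{-4}\bigl(R(g_\delta)-h_\delta^-\bigr)$, but the assertion $R(g_\delta)-h_\delta^-\ge 2|E_\delta|^2+4|\nabla\cdot E_\delta|$ does \emph{not} follow from Lemma~\ref{lemma_energy_condition}: at points where $h_\delta<0$ one has $h_\delta^-=-h_\delta$, and the lemma only yields $R(g_\delta)-h_\delta^-\ge 2|E_\delta|^2+4|\nabla\cdot E_\delta|+2h_\delta$, which is strictly below what you need. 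The phrase ``uses $h_\delta^+\ge 0$'' is exactly the justification for the \emph{opposite} sign. The fix is to solve $\Delta_{g_\delta}u_\delta+f_\delta^- u_\delta=0$ instead; then $u_\delta$ is superharmonic, hence $u_\delta\ge 1$ (not $\le 1$), and one gets $R(\widetilde g_\delta)=u_\delta^{-4}\bigl(R(g_\delta)+h_\delta^-\bigr)\ge u_\delta^{-4}\bigl(2|E_\delta|^2+4|\nabla\cdot E_\delta|+h_\delta^+\bigr)$, after which your $(1-\epsilon)$ comparison goes through verbatim since $u_\delta\ge 1\ge 1-\epsilon$. The paper's displayed equation \eqref{eq-SY-lemma} carries the same sign slip, so you have faithfully reproduced it; but your own line-by-line computation should have flagged the inconsistency.
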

\begin{proof}
	Consider the sequence of smoothed metrics $g_\delta$ above, and let $E_\delta$ be the vector field given by Lemma \ref{lemma_energy_condition}. Now set $f=\frac18 R(g_\delta)-\frac14 |E_\delta|^2_{g_\delta}-\frac12|\nabla\cdot E_\delta|$ in \eqref{eq-SY-lemma} and solve this on the doubled manifold, $\widetilde{M}$. Note that Lemma \ref{lemma_energy_condition} implies $f^-\in L^{6/5}(\widetilde{M})\cap L^{3/2}(\widetilde{M})\cap L^{3}(\widetilde{M})$, since $h_\delta$ therein is compactly supported.
	
	Let $u_\delta$ be the unique solution to \eqref{eq-SY-lemma} and consider the conformally transformed metric $\widetilde g_\delta=u^4g_\delta$ and vector field $\widetilde E_\delta=u^{-6} E_\delta$. Note that we have
	\be 
	\nabla_{\widetilde g_\delta}\cdot \widetilde E_\delta=u^{-6}\frac{1}{\sqrt g_\delta}\p_i(u^6\sqrt{g_\delta}u^{-6}E_\delta)=u^{-6}\nabla_{g_\delta}\cdot E_\delta,
	\ee 
	so the sign of the divergence of the electric field is preserved under the conformal transformation.
	
	By the usual transformation of scalar curvature formula, we have
	\be
	\begin{split}
	R(\widetilde g_\delta) - 2 &| (1-\epsilon)^2\widetilde E_{\delta}|^2_{\widetilde g_\delta} -4|(1-\epsilon)^2\nabla_{\widetilde g_\delta}\cdot \widetilde E_\delta|\\
	&=  u^{-5}_{\delta} \left ( R(g_\delta) u_{\delta} - 8{\Delta}_{g_\delta} u_\delta - 2(\frac{1-\epsilon}{ u_\delta})^4 |E_\delta|_{g_\delta}^2 u_\delta-4(\frac{1-\epsilon}{ u_\delta})^2|\nabla_{g_\delta}\cdot E_\delta|u_\delta \right),
	\end{split}
	\ee
	for some small $\epsilon>0$. We now choose $\delta$ sufficiently small so as to ensure $|u_\delta-1|<\epsilon$, so that we have 
	\be
	\begin{split}
	R(\widetilde g_\delta) - 2 &| (1-\epsilon)^2\widetilde E_{\delta}|^2_{\widetilde g_\delta} -4|(1-\epsilon)^2\nabla_{\widetilde g_\delta}\cdot \widetilde E_\delta|\\
	&=  u^{-5}_{\delta} \left ( R(g_\delta) u_{\delta} - 8{\Delta}_{g_\delta} u_\delta - 2 |E_\delta|_{g_\delta}^2 u_\delta-4|\nabla_{g_\delta}\cdot E_\delta|u_\delta \right),
\end{split}	
\ee
	which is non-negative since $u_\delta$ satisfies \eqref{eq-SY-lemma} with $f$ chosen as above.

\end{proof}

We would like to apply Theorem \ref{thm-A} to our smoothed initial data to conclude Theorems \ref{thm-penrose-corners-A}. However, it may be that after applying our conformal factor that $\S_H$ is no longer the outermost horizon. This can be circumvented by arguments of Miao (pages 279 -- 280 of \cite{Miao02}; see also Appendix A of \cite{MM19}) demonstrating that after such a conformal change the area of the outermost horizon can be made arbitrarily close to the area of $\S_H$ by choosing $\delta$ sufficiently small. In particular, if $\S_\delta$ is the outermost minimal surface in $(M,\widetilde g_\delta)$ containing $\S_H$ then, passing to a subsequence $\delta_i$ with $\delta_i\to 0$ as $i\to\infty$, we conclude
\be 
\lim\limits_{i\to\infty}|\S_i|_{\widetilde g_{\delta_i}}=|\S_H|,
\ee 
where $\S_i$ is the outermost minimal surface in $(M,\widetilde g_{\delta_i})$. Note that the electric charge evaluated on $\S_i$ may be far from the charge on $\S_H$, and $\S_i$ may not be connected. However, the charge at infinity for $(\widetilde g_\delta,(1-\epsilon)\widetilde E_\delta)$ converges to $(1-\epsilon) Q_\infty$ since the metric converges uniformly in $C^0$ and the electric field is only modified on a compact set. Theorem \ref{thm-penrose-corners-A} then follows from taking $\epsilon$ to $0$.

By very similar reasoning we obtain the following lemma analogous to Lemma \ref{lemma_energy_condition}.
\begin{lemma}\label{lem-unif-bound-B}
	Assume $H_- \geq H_+$ and $\Phi_-\geq \Phi_+$, then there exists $\widehat E_\delta$ such that we have
	\be \label{eq-weakDEC}
	R(g_\delta)- 2|\widehat E_\delta|^2_{g_\delta}\geq h_2
	\ee 
	and
	\be 
	\nabla_{g_\delta}\cdot \widehat E_\delta \leq 0
	\ee 
	where $\nabla\cdot \widehat E_\delta$ vanishes outside of $\S\times(-\delta,\delta)$, and as $\delta \to 0$, we have
	\begin{align}
	\|h_2\|_{L^{p}(\widetilde M)}  \to & 0  \label{eq_estimate_1} \\
	Q_\infty( \widehat E_\delta) \to & Q_\infty(E_+) \label{eq_estimate_2},
	\end{align}
	for $p\in\lbrack1,3\rbrack$.
\end{lemma}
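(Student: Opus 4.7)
The proof would follow the pattern of Lemma \ref{lemma_energy_condition}, with the hypotheses of Theorem \ref{thm-penrose-corners-B} now available to promote sign-free bounds into signed ones. First, I would apply Miao's smoothing to obtain the family of metrics $g_\delta$; the scalar curvature estimates recalled in the proof of Lemma \ref{lemma_energy_condition} still apply, so $R(g_\delta)$ is bounded below by $-C$ on the annulus $\{\delta^2/100 < |s| \leq \delta/2\}$, and inside the inner region $|s| \leq \delta^2/50$ it has an additional leading term $(H_- - H_+)\tfrac{100}{\delta^2}\phi(100s/\delta^2)$ that is non-negative thanks to the hypothesis $H_- \geq H_+$. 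I would then construct $\widehat E_\delta$ by adapting the smoothing of Lemma 5.1 of \cite{AKY}, taking $\widehat E_\delta = E$ outside $\S \times (-\delta, \delta)$ and interpolating smoothly within, so that $\widehat E_\delta$ is uniformly bounded in $C^{1,\alpha}$.

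The main new point, relative to Lemma \ref{lemma_energy_condition}, is establishing the pointwise sign of $\nabla_{g_\delta} \cdot \widehat E_\delta$. The distributional divergence $\nabla \cdot E$ on the corner manifold decomposes as $f + (\Phi_+ - \Phi_-)\delta_{s=0}$, where the smooth part $f$ equals $\nabla \cdot E_\pm$ on each side and is therefore non-positive by hypothesis. Since the AKY smoothing is convolution against a non-negative mollifier (up to volume-form corrections in the Gaussian frame), the smoothed divergence takes the form
\be
\nabla_{g_\delta} \cdot \widehat E_\delta \;=\; f \ast \phi_\delta \;+\; (\Phi_+ - \Phi_-)\,\tfrac{50}{\delta^2}\,\phi\!\left(\tfrac{100 s}{\delta^2}\right),
\ee
both of whose terms are non-positive under the present hypotheses $\nabla \cdot E_\pm \leq 0$ and $\Phi_+ \leq \Phi_-$. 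The principal technical obstacle is to ensure that the $O(1)$ Gaussian-frame corrections that appeared in \eqref{eq_div_estimate} do not spoil this sign pointwise; I would address this by refining the interpolation so that inside the neighborhood $\widehat E_\delta$ is manifestly a convex combination of extensions of $E_\pm$ whose divergences inherit the sign from the hypothesis, together with a monotone transfer of flux from the $\Phi_-$ side to the $\Phi_+$ side. This is the only place where the sign hypotheses of Theorem \ref{thm-penrose-corners-B} are genuinely used in the construction.

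Granted this, the remaining claims are immediate. Defining $h_2 := -(R(g_\delta) - 2|\widehat E_\delta|^2_{g_\delta})^-$, we have $R(g_\delta) - 2|\widehat E_\delta|^2_{g_\delta} \geq h_2$ by definition of the negative part. Since $|\widehat E_\delta|^2_{g_\delta}$ is uniformly bounded and the only unbounded contribution to $R(g_\delta)$ is the positive $1/\delta^2$ term inside the inner region, $h_2$ is uniformly bounded in $L^\infty$ and supported on $\S \times (-\delta, \delta)$, a region of $\widetilde M$-volume $O(\delta)$. Hence $\|h_2\|_{L^p(\widetilde M)} \leq C\,\delta^{1/p} \to 0$ for any finite $p$, in particular for $p \in [1,3]$, giving \eqref{eq_estimate_1}. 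Finally, since $\widehat E_\delta$ coincides with $E_+$ outside a compact set of $M$, the total electric charge satisfies $Q_\infty(\widehat E_\delta) = Q_\infty(E_+)$ exactly for every $\delta$, giving \eqref{eq_estimate_2} trivially.
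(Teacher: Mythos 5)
There is a genuine gap at exactly the point you flag as ``the principal technical obstacle.'' Your plan is to arrange, by a refined interpolation, that $\nabla_{g_\delta}\cdot\widehat E_\delta$ is \emph{pointwise} equal to a mollification of $\nabla\cdot E_\pm$ plus the concentrating term $(\Phi_+-\Phi_-)\tfrac{50}{\delta^2}\phi(100s/\delta^2)$, with no sign-indefinite remainder. But the $O(1)$ remainder in \eqref{eq_div_estimate} does not come only from how you interpolate $E$: it comes from the fact that the divergence is taken with respect to the mollified metric $g_\delta$, whose volume form and Christoffel symbols in the Gaussian neighbourhood differ at order one from those of $g_\pm$. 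These error terms persist in the transition regions (e.g.\ $\delta^2/100<|s|\le\delta/2$, where $\phi(100s/\delta^2)$ vanishes and there is no concentrating term to absorb them), and when $\Phi_+=\Phi_-$ there is nothing to absorb them anywhere. No choice of convex combination of $E_\pm$ removes them, so the asserted pointwise inequality $\nabla_{g_\delta}\cdot\widehat E_\delta\le 0$ does not follow from your construction.

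The paper's proof supplies the missing mechanism: keep the $E_\delta$ of Lemma \ref{lemma_energy_condition}, write its divergence as the concentrating (correctly signed) term plus a bounded, compactly supported error $\widetilde h_\delta$, double the manifold, solve $\Delta_{g_\delta}f_\delta=\widetilde h_\delta$ with $f_\delta\to0$ at infinity, and set $\widehat E_\delta=E_\delta-\nabla_{g_\delta}f_\delta$. This cancels the error in the divergence exactly, but at a price your proposal does not account for: the correction $\nabla_{g_\delta}f_\delta$ is \emph{not} compactly supported, so (i) the term $-C_1|E_\delta||\nabla f_\delta|-C_2|\nabla f_\delta|^2$ enters $h_2$ globally and must be shown to vanish in $L^p(\widetilde M)$ for $p\in[1,3]$ via interior elliptic estimates on a compact core together with weighted Sobolev estimates (Bartnik) on the end --- this is the bulk of the paper's proof and is absent from yours; and (ii) the charge at infinity is genuinely perturbed, so \eqref{eq_estimate_2} is a limit obtained by integrating the equation for $f_\delta$, not the exact identity you claim. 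Your description of $h_2$ as supported only in $\S\times(-\delta,\delta)$ with $\|h_2\|_{L^p}\lesssim\delta^{1/p}$ is therefore also incorrect for the construction that actually works.
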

\begin{proof}
Consider the same $g_\delta$ and $E_\delta$ as in Lemma \ref{lemma_energy_condition}. Then we have the same lower bound for $ R(g_\delta)- 2|E_\delta|^2$ as in \eqref{eq-weakeDEC}. Moreover, recall that
	\begin{equation}
	\nabla_{g_\delta} \cdot E_\delta =\widetilde h_\delta,\qquad (s,x)\in\left\{\frac{\delta^2}{50}<|s|\leq\frac{\delta}{2}\right\}\times \Sigma,
	\end{equation}
	and
	\begin{equation}
	\nabla_{g_\delta} \cdot E_\delta
	=\left(\Phi_+  -  \Phi_-\right)\frac{50}{\delta^2}
	\phi\left(\frac{100s}{\delta^2}\right)
	+\widetilde h_\delta,\qquad (s,x)\in\left[-\frac{\delta^2}{50},\frac{\delta^2}{50}\right]\times \Sigma.
	\end{equation}
	where $\widetilde h_\delta$ is uniformly bounded on $ M$ and vanishes outside of $\S\times(-\delta,\delta)$. Therefore we again double the manifold $M$ to obtain $\widetilde M$ and reflect $\widetilde h_\delta$ appropriately. On $\widetilde M$, we solve
	\[
	\Delta_{g_\delta}f=\widetilde h_\delta
	\] 
	with $f$ going to $0$ at infinity in both ends. It follows from the reflection symmetry that $\frac{\partial f}{\partial \nu} =0$ at the horizon (cf. Lemma 4 of \cite{Miao09}). We denote by $f_\delta$ the solution.
		
		Now setting
		\be 
		\widehat E_\delta = E_\delta- \nabla_{g_\delta}  f_\delta,
		\ee 
		we have $\nabla_{g_\delta}  \cdot \widehat E_\delta\leq 0$ and $\nabla_{g_\delta}  \cdot \widehat E_\delta$ vanishes identically outside of a compact set. Moreover, 
		\[
		R(g_\delta)- 2|\widehat E_\delta|^2\geq R(g_\delta)- 2|E_\delta|^2 - C_1|E_\delta| |\nabla_{g_\delta}  f_\delta | - C_2|\nabla_{g_\delta}  f_\delta |^2,
		\]
		where we omit the subscript ${g_\delta}$ on the above norms for notational brevity. 
		
             We fix a compact set $K$ containing an open set of the corner $\Sigma$ and derive the necessary estimate on $K$ and $M\setminus  K$ separately. We may assume that $\delta$ is sufficiently small so that $\Sigma \times (-\delta , \delta)$ is contained in $M\setminus  K$. In particular, $\widetilde h_\delta =0$ on $M\setminus  K$ and the metric $g_\delta$ is independent of $\delta$ on $M\setminus  K$.

		On the compact set $K$, we conclude that $f_\delta$ is uniformly bounded in $C^{1,\alpha}$ since $\widetilde h_\delta$ is uniformly bounded in $L^\infty$. On the other hand, integration by parts shows that as $\delta \to 0$, the $L^2$ norm of $\nabla_{g_\delta}  f_\delta$ goes to zero. As a result, $\nabla_{g_\delta}  f_\delta$ goes to zero in $L^p$ on $K$ for $1\le  p<2$ by the simple inequality between $L^p$ norms on compact domains. Moreover, by interpolation $\nabla_{g_\delta}  f_\delta$ goes to zero in $L^p$ on $K$ for any $2\le p < \infty$. 

		On $M\setminus  K$, we therefore want to show that $|\nabla_{g_\delta}  f_\delta|^2$ goes to zero in $L^{p}$ for $p\in\lbrack1,3\rbrack$. This is easiest to see in the language of weighted Sobolev spaces (see \cite{Bartnik-86} for notation and a thorough treatment). In particular Theorem 1.10 of \cite{Bartnik-86} shows that $|\nabla_{g_\delta}  f_\delta|$ is controlled in the weighted $W^{1,2}_{-3/2}$ norm.	We remark that the constant in Theorem 1.10 of \cite{Bartnik-86} depends on the operator $\Delta_{g_\delta}$, however through the use of cut-off functions it depends only on the operator in some exterior region combined with local interior estimates. Since the interior estimates are shown to hold above, and $	\Delta_{g_\delta}$ is a fixed metric away from the corner, we can conclude that the constant in Theorem 1.10 of \cite{Bartnik-86} can be chosen independently of $\Delta_{g_\delta}$.
		
		From this, straightforward applications of the weighted Sobolev inequality (Theorem 1.2(iv) of \cite{Bartnik-86}) show that $|\nabla_{g_\delta}  f_\delta|^2$ goes to zero in the required $L^p$ spaces.

		These estimates give \eqref{eq_estimate_1} and then \eqref{eq_estimate_2} follows by integrating the equation for $f_\delta$ and the divergence theorem.
\end{proof}
Note that the range of permitted values of $p$ in the $L^p$ estimate for $h_2$ includes $6/5,3/2$ and $3$, as required.

It is clear that a version of this lemma holds, assuming instead $\Phi_-\leq \Phi_+$ and concluding instead $\nabla_{g_\delta}\cdot \widehat E_\delta \leq 0$. Then by the same arguments showing that Lemma \ref {lem-conf-changed-data} follows from Lemma \ref{lemma_energy_condition}, we obtain the following two analogous lemmas.

\begin{lemma}
	Suppose  $(M,g_\pm,E_\pm)$ is a manifold with corner such that $H_- \geq H_+$ and $\Phi_-\geq \Phi_+$, and away from the corner \eqref{eq-energy-condition-B} holds. Then there exists a a sequence of initial data $(M,\widetilde g_\delta, (1-\epsilon)^2\widetilde E_\delta)$ satisfying \eqref{eq-energy-condition-B}.
\end{lemma}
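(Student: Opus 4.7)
The plan is to mirror the proof of Lemma~\ref{lem-conf-changed-data}, substituting the call to Lemma~\ref{lemma_energy_condition} with Lemma~\ref{lem-unif-bound-B} and dropping the $|\nabla\cdot E|$ term from the conformal source, since condition~\eqref{eq-energy-condition-B} contains no such term. First I would take the smoothed pair $(g_\delta,\widehat E_\delta)$ supplied by Lemma~\ref{lem-unif-bound-B}. By construction this pair already satisfies $\nabla_{g_\delta}\cdot\widehat E_\delta\le 0$ on all of $M$, and the scalar curvature deficit has the lower bound $R(g_\delta)-2|\widehat E_\delta|^2_{g_\delta}\ge h_2$, where $\|h_2\|_{L^p(\widetilde M)}\to 0$ for every $p\in[1,3]$ after reflecting across the minimal boundary to form the doubled manifold $\widetilde M$.

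Next, following Miao and Schoen--Yau, I would set
\[ f_\delta = \tfrac{1}{8}\bigl(R(g_\delta)-2|\widehat E_\delta|^2_{g_\delta}\bigr), \]
so that $0\le 8 f_\delta^- = \max\bigl(-(R(g_\delta)-2|\widehat E_\delta|^2_{g_\delta}),\,0\bigr) \le h_2^-$; this inherits the same $L^{6/5}\cap L^{3/2}\cap L^3$ decay. Then I would solve
\[ \Delta_{g_\delta} u_\delta - f_\delta^- u_\delta = 0,\qquad u_\delta\to 1 \text{ at infinity} \]
on $\widetilde M$ via Lemma~3.2 of \cite{SY79}, and invoke Proposition~4.1 of \cite{Miao02} to guarantee $|u_\delta-1|<\epsilon$ whenever $\delta$ is small enough relative to $\epsilon$. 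Define the conformally modified data $\widetilde g_\delta = u_\delta^4 g_\delta$ and $\widetilde E_\delta = u_\delta^{-6}\widehat E_\delta$.

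It remains to verify both parts of \eqref{eq-energy-condition-B} for $(\widetilde g_\delta,(1-\epsilon)^2\widetilde E_\delta)$. The divergence condition is automatic from the conformal identity
\[ \nabla_{\widetilde g_\delta}\cdot\bigl((1-\epsilon)^2\widetilde E_\delta\bigr) = (1-\epsilon)^2 u_\delta^{-6}\,\nabla_{g_\delta}\cdot\widehat E_\delta \le 0. \]
For the scalar curvature condition, the same computation as in Lemma~\ref{lem-conf-changed-data} gives
\[ R(\widetilde g_\delta) - 2\bigl|(1-\epsilon)^2\widetilde E_\delta\bigr|^2_{\widetilde g_\delta} = u_\delta^{-5}\Bigl( R(g_\delta)u_\delta - 8\Delta_{g_\delta} u_\delta - 2(1-\epsilon)^4 u_\delta^{-3} |\widehat E_\delta|^2_{g_\delta} \Bigr), \]
and substituting the equation for $u_\delta$ together with $R(g_\delta) = 8 f_\delta + 2|\widehat E_\delta|^2_{g_\delta}$ rearranges this to
\[ u_\delta^{-5}\Bigl( 8 f_\delta^+ u_\delta + 2|\widehat E_\delta|^2_{g_\delta}\bigl(u_\delta - (1-\epsilon)^4 u_\delta^{-3}\bigr) \Bigr), \]
which is non-negative because $u_\delta > 1-\epsilon$ forces $u_\delta^4 > (1-\epsilon)^4$.

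The only real obstacle is the uniform bound $|u_\delta-1|<\epsilon$, which hinges on $f_\delta^-$ going to zero in $L^{6/5}\cap L^{3/2}\cap L^3$. This is exactly the content of the $L^p$ estimate on $h_2$ in Lemma~\ref{lem-unif-bound-B}, valid precisely for the required exponents; once that bound is available, every remaining check is algebraic and mirrors the scalar curvature estimate in Lemma~\ref{lem-conf-changed-data}. No additional control on the outermost horizon area is needed at this stage, as that analysis is carried out later when Theorem~\ref{thm-B} is applied to deduce Theorem~\ref{thm-penrose-corners-B}.
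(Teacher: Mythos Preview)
Your proposal is correct and follows exactly the approach the paper indicates: the paper simply states that this lemma follows ``by the same arguments showing that Lemma~\ref{lem-conf-changed-data} follows from Lemma~\ref{lemma_energy_condition},'' and you have carefully spelled out those arguments, replacing the input Lemma~\ref{lemma_energy_condition} by Lemma~\ref{lem-unif-bound-B} and dropping the $|\nabla\cdot E|$ term from the conformal source so as to match condition~\eqref{eq-energy-condition-B}. Your explicit rearrangement of the conformally transformed energy condition into $u_\delta^{-5}\bigl(8f_\delta^+ u_\delta + 2|\widehat E_\delta|^2_{g_\delta}(u_\delta-(1-\epsilon)^4 u_\delta^{-3})\bigr)$ is in fact a bit cleaner than the corresponding display in Lemma~\ref{lem-conf-changed-data}.
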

\begin{lemma}
	Suppose  $(M,g_\pm,E_\pm)$ is a manifold with corner such that $H_- \geq H_+$ and $\Phi_-\leq \Phi_+$, and away from the corner \eqref{eq-energy-condition-C} holds. Then there exists a a sequence of initial data $(M,\widetilde g_\delta, (1-\epsilon)^2\widetilde E_\delta)$ satisfying \eqref{eq-energy-condition-C}.
\end{lemma}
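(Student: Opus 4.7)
The plan is to mirror the strategy of Lemma \ref{lem-conf-changed-data}, but feeding in the divergence correction supplied by the version of Lemma \ref{lem-unif-bound-B} appropriate to this sign convention.

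First, start with Miao's mollified metric $g_\delta$ and the AKY smoothed field $E_\delta$. Because $\Phi_+-\Phi_-\ge 0$, the concentrated corner piece $(\Phi_+-\Phi_-)\tfrac{50}{\delta^2}\phi(100s/\delta^2)$ in \eqref{eq_div_estimate} already has the correct sign, and only the bounded remainder $\widetilde h_\delta$ supported in $\Sigma\times(-\delta,\delta)$ can destroy the non-negativity of $\nabla\cdot E_\delta$. To remove it, I would reflect across the outermost horizon $\Sigma_H$ to form the doubled manifold $\widetilde M$, solve $\Delta_{g_\delta} f_\delta = \widetilde h_\delta$ with $f_\delta\to 0$ at both ends (the reflection symmetry forces Neumann data at $\Sigma_H$), and set $\widehat E_\delta := E_\delta - \nabla_{g_\delta} f_\delta$. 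Then $\nabla_{g_\delta}\cdot \widehat E_\delta$ coincides with the non-negative corner bump inside the collar and with the unaltered $\nabla\cdot E_\pm\ge 0$ elsewhere, so $\nabla_{g_\delta}\cdot \widehat E_\delta\ge 0$ on $M$. The same compact-interior and weighted Sobolev estimates (Bartnik's Theorem 1.10 and Theorem 1.2(iv) of \cite{Bartnik-86}) used in Lemma \ref{lem-unif-bound-B} give $\|\nabla_{g_\delta} f_\delta\|_{L^p}$ and $\||\nabla_{g_\delta} f_\delta|^2\|_{L^p}$ tending to $0$ for $p\in[1,3]$, hence
\begin{equation*}
R(g_\delta) - 2|\widehat E_\delta|^2_{g_\delta} \ge h_2, \qquad \|h_2\|_{L^p(\widetilde M)}\to 0\ \text{for } p\in[1,3],
\end{equation*}
together with $Q_\infty(\widehat E_\delta)\to Q_\infty(E_+)$ and $Q(\Sigma_H,\widehat E_\delta)\to Q(\Sigma_H,E_-)$ by the divergence theorem applied to $\Delta f_\delta = \widetilde h_\delta$.

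Next, carry out the conformal change as in Lemma \ref{lem-conf-changed-data}, but using the simpler weight $f := \tfrac{1}{8}R(g_\delta) - \tfrac{1}{4}|\widehat E_\delta|^2_{g_\delta}$ (no divergence term, matching the weaker energy condition in \eqref{eq-energy-condition-C}). Since $f\ge h_2/8$, the norms $\|f^-\|_{L^{6/5}\cap L^{3/2}\cap L^{3}(\widetilde M)}$ tend to $0$, so Schoen--Yau's Lemma 3.2 of \cite{SY79} produces a unique $u_\delta$ solving $\Delta_{g_\delta} u_\delta - f^- u_\delta = 0$ with $u_\delta\to 1$ at infinity, and Miao's estimates show $u_\delta\to 1$ uniformly. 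Define $\widetilde g_\delta := u_\delta^4 g_\delta$ and $\widetilde E_\delta := u_\delta^{-6}\widehat E_\delta$. The transformation law
\begin{equation*}
\nabla_{\widetilde g_\delta}\cdot \widetilde E_\delta \;=\; u_\delta^{-6}\,\nabla_{g_\delta}\cdot \widehat E_\delta
\end{equation*}
preserves the sign of the divergence, so $(1-\epsilon)^2 \widetilde E_\delta$ still has non-negative divergence. For the scalar curvature, choose $\delta$ small enough that $|u_\delta-1|<\epsilon$; then $(1-\epsilon)^4 u_\delta^{-3}\le u_\delta$, and the conformal formula together with the defining equation for $u_\delta$ gives
\begin{equation*}
R(\widetilde g_\delta) - 2(1-\epsilon)^4 |\widetilde E_\delta|^2_{\widetilde g_\delta}
\;\ge\; u_\delta^{-5}\bigl(R(g_\delta)u_\delta - 8\Delta_{g_\delta} u_\delta - 2|\widehat E_\delta|^2_{g_\delta}u_\delta\bigr)
\;=\; 8 u_\delta^{-4} f^+ \;\ge\; 0.
\end{equation*}
Thus $(M,\widetilde g_\delta,(1-\epsilon)^2\widetilde E_\delta)$ satisfies both inequalities in \eqref{eq-energy-condition-C}.

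The only delicate point is the compatibility of the two successive modifications: the correction $-\nabla f_\delta$ used to force $\nabla\cdot \widehat E_\delta\ge 0$ introduces the error $h_2$ in the scalar curvature inequality, and one must know this error is small enough---specifically in $L^{6/5}\cap L^{3/2}\cap L^{3}$---for the subsequent conformal factor $u_\delta$ to stay close to $1$ so that $\widetilde g_\delta\to (g_-,g_+)$ in $C^0$ (and $C^2$ away from the corner) and $Q_\infty$ is preserved in the limit. This is exactly the range of $p$ provided by the weighted Sobolev argument in Lemma \ref{lem-unif-bound-B}, so no new analytic input beyond what has already been established is required, and the proof closes by the same compactness arguments used previously.
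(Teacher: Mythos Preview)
Your proposal is correct and follows exactly the approach the paper indicates: the paper does not give a separate proof for this lemma but simply remarks that the sign-flipped version of Lemma~\ref{lem-unif-bound-B} holds (with $\Phi_-\le\Phi_+$ yielding $\nabla_{g_\delta}\cdot\widehat E_\delta\ge 0$) and then states that the conformal-change argument of Lemma~\ref{lem-conf-changed-data} goes through verbatim with the weight $f=\tfrac18 R(g_\delta)-\tfrac14|\widehat E_\delta|^2_{g_\delta}$. You have filled in precisely these details, including the observation that the corner bump $(\Phi_+-\Phi_-)\tfrac{50}{\delta^2}\phi$ already has the right sign so only the bounded remainder $\widetilde h_\delta$ needs to be removed, and that the $L^p$ control on $h_2$ for $p\in[1,3]$ from Lemma~\ref{lem-unif-bound-B} covers the exponents $6/5,3/2,3$ needed for the Schoen--Yau/Miao conformal step.
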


\subsection*{Proof of the charged Riemannian Penrose inequality with corners}
As above, the proofs of Theorems \ref{thm-penrose-corners-A}, \ref{thm-penrose-corners-B} and \ref{thm-penrose-corners-C} are essentially identical. We therefore simply present the proof of Theorem \ref{thm-penrose-corners-C}.

\begin{proof}[Proof of Theorem \ref{thm-penrose-corners-C}]
Since each $\S_i$ as described above is homologous to $\S_H$, the topological assumption on $\Omega$ guarantees that each $\S_i$ is connected. We can then apply Theorem \ref{thm-C} to conclude
\be 
	\m_{ADM}(\widetilde g_\delta)\geq \(\frac{|\S_{i}|}{16\pi}\)^{\frac12}\(1+\frac{4(1-\epsilon)^4\pi Q(\S_{i})^2}{ |\S_{i}|}\)
\ee 
Since $\nabla_{\widetilde g_\delta}\cdot \widetilde E_\delta\geq0$, by the divergence theorem we have $Q(\S_i)\geq Q(\S_H)$. Taking limits, $\delta$ then $\epsilon$ to zero, we are done.
\end{proof}

\section{Charged asymptotically flat extensions} \label{S-Extensions}
In \cite{Chen18}, a quasi-local Penrose inequality was established for an isometric embedding into a spherically symmetric static manifold that is used as a reference manifold for the quasi-local mass. One of the key ideas is to create a Shi--Tam type extension of the compact initial data with a boundary to obtain an asymptotically flat initial data set using the isometric embedding. To apply the same idea to the charged Penrose inequality, we must show that this construction leads to a charged extension. We then apply the different versions of the charged Penrose inequality with corners to such an extension to obtain each version of the quasi-local charged Penrose inequality.
	
	To this end, we now recall some details from \cite{Chen18} regarding the type of extensions that we will make use of here. Suppose $(M,g)$ is a compact $3$-manifold with boundary $\S$ having non-negative Gauss curvature and non-negative mean curvature. Suppose there exists an isometric embedding $\S_0$ of $\S$ into some Reissner--Nordstr\"om manifold, in which case we can write this Reissner--Nordstr\"om manifold as
	\be 
		\overline{g}=ds^2+\sigma_s,
	\ee 
	where $\sigma_0$ is the induced metric on $\Sigma_0$, and similarly we denote each leaf by $\S_s$ with induced metric $\sigma_s$. Let $\nu=\p_s$ denote the unit normal vector field to the leaves. A function $T$ is defined in \cite{Chen18} via the static equation for a more general static manifold, however here we are only interested in static reference manifolds in the Reissner--Nordstr\"om family so we can fix $T$ explicitly. Specifically, we define $T=2\sin^2(\theta)|E|^2$, where $\theta$ is the angle between the $\p_s$ and $\p_r$; that is, $\cos(\theta)=|\p_r|^{-1}_{\overline g}\p_r\cdot\p_s$. Still following \cite{Chen18}, we are interested in defining a new metric
	\be 
		\widetilde{g}=u^2ds^2+\sigma_s,
	\ee 
	in terms of some warping function $u$. The function $T$ defined above is then used to solve a prescribed scalar curvature equation for the metric $\widetilde g$ to ensure that it satisfies the dominant energy condition, $R(\widetilde{g})\geq 2|\widetilde E|^2$, for an appropriate choice of electric field vector $\widetilde E$. Precisely, the prescribed scalar curvature equation is
	\be \label{eq-prescR}
		R(\widetilde g)=R(\overline g)+(u^{-2}-1)T,
	\ee 
	which can be solved for a smooth asymptotically flat metric $\widetilde g$ for any initial value $u(0)>0$, provided
	\be \label{eq-conv1}
det(A_0)  - \overline Ric  (\nu,\nu)  + \frac{T}{2} > 0,
	\ee 
where $A_0$ denotes the second fundamental form of $\S$, $det(A_0)=\kappa_1\kappa_2$, and $\overline{Ric}$ denotes the Ricci curvature of $\overline g$ (Proposition 4.1 of \cite{Chen18}). Note, throughout we will use the notation $\overline \cdot$ to denote quantities corresponding to $\overline g$, such as $\overline R = R(\overline g)$. We remark that via the Gauss equation, this condition may we expressed as
	\be \label{eq-conv12}
		K>(\Phi)^2.
	\ee 
	Some intuition for this condition can be seen by integrating \eqref{eq-conv12} over $\S$, applying the Gauss-Bonnet Theorem, and comparing this to the area--charge inequality for black holes, $|\S|\geq 4\pi Q^2$ \cite{DJR,Gibbons}.
	
	Having the appropriate asymptotically flat manifolds now at hand, we must ensure that an appropriate choice of the electric field in our extensions can be made. To this end, we decompose the electric field of the reference Reissner--Nordstr\"om manifold, $\overline E$, into
	\be 
		\overline E=\overline E^s\p_s+\overline E^T,
	\ee 
	where $\overline E^T$ is tangential to each leaf, $\S_s$. We then have
\[
\begin{split}
(\overline E^s)^2   = \cos^2 \theta |E|^2 =  \frac{\overline R  \cos^2 \theta}{2} \\
|\overline E^T|^2 = \sin^2 \theta |E|^2 = \frac{\overline R  \sin^2 \theta}{2}.
\end{split}
\]

	Consider the vector field $\widetilde E$ given by
\be \label{eq-Efielddefn}
\widetilde E:=\frac{1}{u} \left ( \overline E^s \p s +\overline E^T \right)
\ee 
and we claim it satisfies
\begin{equation} \label{eq-changed-E}
\begin{split}
R(\widetilde g) = & \, 2 |\widetilde E|_{\widetilde g}^2 \\
\nabla_{\widetilde g}  \cdot \widetilde E = &\, 0. 
\end{split}
\end{equation}

The Christoffel symbols can be computed, noting $\Gamma_{ss}^ i =  0 $, as
\[
\begin{split}
\widetilde \Gamma_{ss}^s &=  - u^{-1} \partial_s u \\
\widetilde \Gamma_{ss}^a &= - u \sigma_s^{ab} \partial_b u\\
\widetilde \Gamma_{ab}^c &= \Gamma_{ab}^c \\
\widetilde \Gamma_{ab}^s &= u^{-2} \Gamma_{ab}^s  .
\end{split}
\]

We then compute
\[
\begin{split}
\nabla_{\widetilde g}  \cdot \widetilde E 
=\, &  u^{-2} \widetilde D_s \widetilde E_s + \sigma_s^{ab} D_a \widetilde E_b \\
=\, & u^{-2} \left ( \partial_s  ( u \overline E^s  ) - \widetilde \Gamma_{ss}^s u\overline E^s  - \widetilde \Gamma_{ss}^a u^{-1} \overline E_a    \right )\\ & + \sigma_s^{ab} \left  ( \partial_a (u^{-1} \overline E_b ) - \widetilde \Gamma_{ab}^c   u^{-1} \overline E_c - \widetilde\Gamma_{ab}^s u \overline E_s \right) \\
=\, & u^{-1} \nabla_{\overline g} \overline E  \\
= \,& 0.
\end{split}
\]

We also see that the energy condition holds, by

\[
\begin{split}
2|\widetilde E|_{\widetilde g}^2 = &2(\overline E^s)^2 + 2u^{-2} |\overline E^T|^2 \\
=&  \overline R ( \cos^2 \theta + u^{-2} \sin^2 \theta ) \\
=&   \overline R  + (\frac{1}{u^2} -1)  \sin^2 \theta \overline R \\
=&  \overline R  + (\frac{1}{u^2} -1) T.
\end{split}
\]

We now turn to consider the quantity
	\be\label{eq-monoquan}
\int_{\Sigma_s} VH_o(1-\frac{1}{u}) d\S_s,
\ee
where $V=\sqrt{1+\frac{\overline Q^2}{r^2}-\frac{2\overline m}{r}}$ is the static potential for the Reissner--Nordstr\"om manifold. We know from Proposition 3.2 of \cite{Chen18}, the quantity defined by \eqref{eq-monoquan} is monotonically non-increasing with respect to $s$ provided that
\be \label{eq-conv3}
det(A_0)- \frac{T}{2} + \frac{\partial V}{\partial \nu} \frac{H_o}{V} > 0.
\ee

In what follows, we will consider separately the cases where the reference Reissner--Nordstr\"om manifold has positive mass or zero mass. In the case of positive mass, we will apply the results of \cite{Chen18} directly to obtain an appropriate extension. However, the hypotheses required for this implicitly assume the mass-charge inequality holds, $\overline m\geq|\overline Q|$ (this is implied by Condition ($\dagger$), below). We therefore treat the case where the reference manifold has zero mass separately. We record here the convexity conditions from \cite{Chen18}.
\begin{definition}\label{def-convex1}
	It was shown in \cite{Chen18} that there exist constants $C_1$ and $C_2$, depending only on the mass and charge of the reference manifold, such that the prescribed scalar curvature equation \eqref{eq-prescR} can be solved, and that the static mass is monotone along the foliation, provided that
\be\label{eq-cond1}\begin{split}
		\overline{\Ric}(\nu,\nu)&<0\\
		\min \kappa_a &> \frac{C_1}{r^2}\\
		r &> C_2,
	\end{split}\ee 
	where $\kappa_a$ are the principal curvatures of $\S_0$ and $r$ standard radial coordinate in the Reissner--Nordstr\"om manifold. If \eqref{eq-cond1} holds for these constants, we say that Condition ($\dagger$) holds.
\end{definition}

We now turn to the case where the reference Reissner--Nordstr\"om manifold has zero mass. Since the quasi-local mass with static reference -- in some sense -- sets its ground state energy to be that of the reference manifold, there is no hope for a positivity result in general. However, by choosing the reference manifold to have zero mass some hope of such a statement is recovered. For this reason, one may view the quasi-local mass with respect to a zero mass Reissner--Nordstr\"om manifold as a kind of ``charged Brown--York mass."

For the Reissner--Nordstr\"om manifold with zero or negative mass, the theorems proven in \cite{Chen18} can not be applied directly. For example, the static potential is now decreasing in the radial outward direction. We now construct appropriate extensions for using the zero mass Reissner--Nordstr\"om manifold as the reference manifold. Recall that the metric is of the form
\[
\overline g = \frac{d r^2}{1  + \frac{\overline Q^2}{r^2}} + r^2 dS^2.
\]

The metric is conformally flat with the following explicit coordinate transformation
\[
r(\rho)=\rho - \frac{\overline Q^2}{4 \rho}.
\]
In terms of $\rho$, the metric reads
\[
\overline g = (1-\frac{\overline Q^2}{4 \rho^2})^2 \left ( d \rho ^2 + \rho^2 dS^2 \right ).
\]
Let $\Sigma_0$ be an image of the isometric embedding into $\overline g$ in the above coordinate system. Let $\Sigma_s$ be the image of the unit normal flow in $\overline g $ starting at $\Sigma_0$. To prove the quasi-local Penrose inequality following the approach in \cite{Chen18}, we need that the $\S_s$ is a convex foliation of the exterior of $\S_0$ such that the prescribed scalar curvature equation \eqref{eq-prescR} admits a smooth solution that converges to $1$ at infinity, such that the quasi-local mass of $\S_s$ with respect to $\widetilde g$ is monotonically non-increasing. As discussed above, we therefore require that \eqref{eq-conv1} and \eqref{eq-conv3} hold:

\[
\begin{split}
det(A_0)- \frac{T}{2} + \frac{\partial V}{\partial \nu} \frac{H_o}{V} >  0 & \\
det(A_0)  - \overline Ric  (\nu,\nu)  + \frac{T}{2} >  0 &.
\end{split}
\]

Recall that $\overline R \ge T \ge 0$. Hence it suffices to have the following three inequalities
\[
\begin{split}
det(A_0)+ 2 \frac{\partial V}{\partial \nu} \frac{H_o}{V} > 0,& \\
det(A_0)- \overline R > 0,& \\
det(A_0)  - \overline Ric  (\nu,\nu)  > 0 &. 
\end{split}
\]
For the static potential, we have 
\[
2\frac{\partial V}{\partial \nu} \frac{1}{V}   \ge - \frac{C_1}{\rho ^3}, 
\]
and for the curvature, we have 
\[
\frac{C_2}{\rho ^4}  \ge \overline R \ge \overline Ric  (\nu,\nu)  > 0.
\]
Hence, it suffices to have that on each $\S_s$,
\[
\rho^3 det(A_0) > C_1 H_o \qquad \text{ and } \qquad \rho^4  det(A_0) > C_2.
\]
We identify $\S_s$ with $\widetilde \S_s$ in $\R^3$ using the conformal transformation. Recall that under a conformal change, the principal curvatures satisfies
\[
\kappa= \frac{\tilde \kappa}{1-\frac{\overline Q^2}{4 \rho^2}}+\frac{\overline Q^2}{2 \rho^3} \frac{\partial \rho}{\partial \nu}.
\]
It follows that $ \rho^4  det(\widetilde A_0) > C_2 $ implies $\rho^4  det(A_0) > C_2$, as long as $ \frac{\partial \rho}{\partial \nu} \ge 0$. Moreover,
\[
\rho^3 det(\widetilde A_0) > C_1 \widetilde H_o \qquad \text{ and } \qquad \rho^3 \widetilde  H_o > 2C_1.
\]
implies that $\rho^3 det(A_0) > C_1 H_o$,  as long as $ \frac{\partial \rho}{\partial \nu} \ge 0$ since
\[
\begin{split}
  & \rho^3 det(A_0) -C_1 H_o \\
= & \rho^3 \left( \frac{\tilde \kappa_1}{1-\frac{\overline Q^2}{4 \rho^2}}+\frac{\overline Q^2}{2 \rho^3} \frac{\partial \rho}{\partial \nu} \right)\left( \frac{\tilde \kappa_1}{1-\frac{\overline Q^2}{4 \rho^2}}+\frac{\overline Q^2}{2 \rho^3} \frac{\partial \rho}{\partial \nu} \right) -C_1\left( \frac{\widetilde H_o}{1-\frac{\overline Q^2}{4 \rho^2}}+\frac{\overline Q^2}{ \rho^3} \frac{\partial \rho}{\partial \nu} \right)\\
\ge & \frac{\rho^3 det(\widetilde A_0) - C_1 \widetilde H_o}{(1-\frac{\overline Q^2}{4 \rho^2})^2} + \frac{1}{2}\frac{(\rho^3 \widetilde  H_o - 2C_1)}{1-\frac{\overline Q^2}{4 \rho^2}}\frac{\overline Q^2}{ \rho^3} \frac{\partial \rho}{\partial \nu}+ \rho^3 (\frac{\overline Q^2}{2 \rho^3} \frac{\partial \rho}{\partial \nu})^2.
\end{split}
\]
As a result, it suffices to require that
\begin{equation}\label{convex_condition_zero_mass_flow}
\frac{\partial \rho}{\partial \nu} > 0  \qquad \rho^3 det(\widetilde A_0) > C_1 \widetilde H_o \qquad  \rho^3 \widetilde  H_o > 2C_1 \qquad \text{ and } \qquad \rho^4  det(\widetilde A_0) > C_2 
\end{equation}
on $\widetilde \S_s$ where $\widetilde A_0$ and $ \widetilde H_o$ are the second fundamental form and mean curvature of $\widetilde \S_s$  in $\R^3$. 

Recall that the family $\widetilde \S_s$  in $\R^3$ satisfies the normal flow with the prescribed speed 
\[
F = \frac{1}{1-\frac{\overline Q^2}{4\rho^2}},
\]
coming from the conformal factor. Then along the flow, $\rho$ and $\cos \theta$ satisfy
\[
\begin{split}
\frac{d}{ds} \rho & = F \cos \theta \\
\frac{d}{ds}  \cos \theta & \ge \frac{F \sin^2 \theta }{ 2 \rho}.
\end{split}
\]
Hence, $\rho$ and  $\cos \theta$ are both increasing as long as they are positive initially. The second fundamental form satisfies (see \cite{Zhu}, for example)
\begin{equation}\label{2ff_evol}
\frac{d}{ds}  (\widetilde A_0)_{ab} = -  \nabla_a \nabla_b F  -   F A_{ac} \sigma^{cd}(\widetilde A_0)_{db}
\end{equation}
where
\[
- \nabla_a \nabla_b F= - D_a D_b F+ A_{ab}  \partial_\nu F,
\]
and $a,b=1,2$ are coordinates on each leaf. There exists constants $C_3$ and $C_4$ such that for $\rho > Q$
\[
\frac{C_3}{\rho^4}  \ge | D_a D_b F |  
\]
and 
\[
0 \ge \partial_\nu F \ge -\frac{C_4}{\rho^3}.
\]
Combining \eqref{2ff_evol} with the above bounds, we get, by diagonalizing $\widetilde A_0$ at a given time,
\[
 - F \widetilde \kappa^2 + \frac{C_3}{\rho^4}\ge \frac{d}{ds} \widetilde \kappa  \ge - F \widetilde \kappa^2-  \frac{\left ( C_3 + C_4 \widetilde \kappa \rho \right)}{\rho^4}.
\]
Thus, we have 
\[
\rho^4 \frac{d}{ds} \widetilde H_o =\rho^4 \frac{d}{ds} (\widetilde \kappa_1 +  \widetilde \kappa_2) \le  -F\rho^4 (\widetilde \kappa_1^2 +  \widetilde \kappa_2^2) + 2C_3< - \rho^4 det(\widetilde A_0) +C_3.
\]
This shows that $\widetilde H_o$ is decreasing assuming $ \rho^4 det(\widetilde A_0) > C_3$. Let $\sqrt{C'} = \max\{C_2,C_3\}$. \eqref{convex_condition_zero_mass_flow} holds for all $s$ if 
\begin{equation}\label{convex_condition_one_mass_flow}
 \rho^3 \widetilde  H_o > 2C_1 \qquad \text{ and } \qquad \rho^4  det(\widetilde A_0) >( C' )^2
\end{equation}
hold for all $s$ and $\cos \theta >0$ and $ \rho^3 det(\widetilde A_0) > C_1 \widetilde H_o$ hold  for $s=0$.

Since $\rho^4  det(\widetilde A_0) > (C')^2  $  follows from $ \rho^2 \widetilde \kappa > C'$, we compute
\[
\begin{split}
\frac{d}{ds} \rho^2 \widetilde \kappa \ge & \frac{F}{\rho} \left ( 2\cos\theta \rho^2 \widetilde  \kappa  - \frac{(\rho^2 \widetilde  \kappa)^2}{\rho} -  \frac{C_3}{\rho} - C_4 \widetilde \kappa \right) \\
=&  \frac{F}{\rho} \left (\rho^2 \widetilde  \kappa \Big ( 2\cos\theta  - \frac{\rho^2 \widetilde  \kappa}{\rho} - \frac{C_4}{\rho^2} \Big )   -  \frac{C_3}{\rho} \right) 
\end{split}
\]
Assuming $\cos \theta > \delta$ for some $\delta >0$ when $s=0$, then $\cos \theta > \delta$ for all $s$. We claim that there exists $C$ and $C'$  such that $ \rho^2 \widetilde \kappa > C'$  for all $s$ if
$ \rho^2 \widetilde \kappa > C'$ and $\rho> C$ at $s=0$. We prove by contradiction.

Suppose it does not hold for all $s$. Then there is a first time $s= s_0$ that it is violated. At $s = s_0$, we have
\[
 0 \ge \frac{d}{ds} \rho^2 \widetilde \kappa   >  \frac{F}{\rho}  \left ( C'\Big ( 2\delta  -\frac{ C'}{C}- \frac{C_4}{C^2} \Big )   -  \frac{C_3}{C}   \right) 
\]
the right hand side is positive for sufficiently large $C$.

Finally, we observe that $\rho^3 \widetilde  H_o > 2C_1 $  and $ \rho^3 det(\widetilde A_0) >C_1 \widetilde H_o$ both follow from $ \rho^2 \widetilde \kappa > C'$ and $\rho> C$. Indeed, 
\[
\begin{split}
\rho^3 \widetilde  H_o =& \rho (\rho^2 \sum_a \tilde \kappa_a) > 2CC' \\
\rho^3 det(\widetilde A_0) =&\frac{1}{2} \rho  (\rho^2 \tilde \kappa_1\tilde \kappa_2 + \rho^2 \tilde \kappa_2\tilde \kappa_1 ) > \frac{1}{2}CC' \widetilde  H_o.
\end{split}
\]
As a result, it suffices to choose $C$ such that 
\[
CC'=2C_1.
\]

Then from Propositions 3.2 and 4.1 of \cite{Chen18}, the proof of Proposition 4.3 therein, and the discussion given above, we conclude:
\begin{prop}\label{prop-cond2}
       Given $\delta>0$, there exist constants $C,C'$ depending only on $\delta$ and the charge of the reference manifold such that for any surface $\S_0$ in the zero mass Reissner--Nordstr\"om manifold if
	\begin{equation}\label{convex_condition_zero_mass_initial}
	\begin{split}
     \cos \theta >& \delta   \\
     \rho > & C \\
     \rho^2 \widetilde \kappa >& C' 
	\end{split}
	\end{equation}
	then the unit normal flow from $\S_0$ creates a convex foliation of the exterior of $\S_0$. Furthermore, for any initial value $u>0$ on  $\S_0$, the solution $\widetilde g$ to the prescribed scalar curvature equation \eqref{eq-prescR} is a smooth asymptotically flat metric such that 
	\[
	\int_{\S_0} V H_o (1- \frac{1}{u}) d\S_0 \ge \m_{ADM}(\widetilde g) .
	\]
\end{prop}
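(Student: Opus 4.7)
The plan is to synthesize the ODE analysis carried out in the discussion preceding the proposition with the existence-and-monotonicity machinery of Propositions~3.2 and~4.1 of \cite{Chen18}. Conceptually, the initial bounds \eqref{convex_condition_zero_mass_initial} are chosen so that they propagate to the dynamic bounds \eqref{convex_condition_zero_mass_flow} for every $s \geq 0$, at which point the foliation $\{\Sigma_s\}$ is strictly convex and the hypotheses needed to invoke the \cite{Chen18} framework hold on every leaf.

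First I would verify persistence of the initial bounds. Since $\frac{d}{ds}\cos\theta \geq \tfrac{F \sin^2\theta}{2\rho} \geq 0$, the condition $\cos\theta > \delta$ is preserved, and $\rho$ is monotonically increasing along the flow. For the curvature bound, I would run the contradiction argument already sketched in the excerpt: if $\rho^2 \widetilde\kappa > C'$ were first violated at some $s_0 > 0$, then at $s_0$ one would have
\[
0 \geq \frac{d}{ds}(\rho^2\widetilde\kappa) > \frac{F}{\rho}\!\left( C'\!\left( 2\delta - \tfrac{C'}{C} - \tfrac{C_4}{C^2}\right) - \tfrac{C_3}{C}\right),
\]
whose right-hand side is strictly positive once $C$ is chosen large enough (depending only on $\delta, C', C_3, C_4$), a contradiction. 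The algebraic inequalities displayed immediately before the proposition then give $\rho^3\widetilde H_o > 2 C_1$ and $\rho^3\det(\widetilde A_0) > C_1 \widetilde H_o$ provided $CC' = 2C_1$, and taking $\sqrt{C'} \geq \max\{C_2, C_3\}$ yields $\rho^4\det(\widetilde A_0) > (C')^2$. Altogether this establishes \eqref{convex_condition_zero_mass_flow} on every $\Sigma_s$, and in particular $\widetilde H_o > 0$ together with $\det(\widetilde A_0) > 0$ shows that each $\widetilde\Sigma_s$ is strictly convex in $\R^3$, so $\{\Sigma_s\}_{s \geq 0}$ is a convex foliation of the exterior of $\Sigma_0$.

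Second, the conformal relation $\kappa = \widetilde\kappa/(1 - \overline Q^2/(4\rho^2)) + (\overline Q^2/(2\rho^3))\partial_\nu\rho$ transfers these $\R^3$ bounds into curvature bounds on $A_0$ relative to $\overline g$ that render the convexity conditions \eqref{eq-conv1} and \eqref{eq-conv3} valid on every $\Sigma_s$. Proposition~4.1 of \cite{Chen18} then produces, for any prescribed initial value $u(0) > 0$, a smooth asymptotically flat solution $\widetilde g = u^2 ds^2 + \sigma_s$ to the prescribed scalar curvature equation \eqref{eq-prescR}, and Proposition~3.2 of \cite{Chen18} gives the non-increase of \eqref{eq-monoquan} along the flow. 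Consequently
\[
\int_{\Sigma_0} V H_o\!\left(1 - \tfrac{1}{u}\right) d\Sigma_0 \geq \lim_{s \to \infty}\int_{\Sigma_s} V H_o\!\left(1 - \tfrac{1}{u}\right) d\Sigma_s,
\]
and identifying this limit with $\m_{ADM}(\widetilde g)$ exactly as in the proof of Proposition~4.3 of \cite{Chen18} closes the argument.

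The main obstacle I expect is the uniform choice of $C$ and $C'$: they must depend only on $\delta$ and $\overline Q$ and not on the particular $\Sigma_0$. The constants $C_1,C_2,C_3,C_4$ are built explicitly from the zero-mass Reissner--Nordstr\"om geometry, its static potential, and the conformal factor $(1 - \overline Q^2/(4\rho^2))^2$, so in principle this is bookkeeping; however one must verify simultaneously that the contradiction argument closes, that $CC' = 2C_1$, and that $\sqrt{C'} \geq \max\{C_2,C_3\}$, all while keeping the resulting $C,C'$ finite and controlled by $\delta$ and $\overline Q$ alone. A secondary technical point is transporting the Shi--Tam-style identification of the limiting boundary integral with $\m_{ADM}(\widetilde g)$ to the zero-mass reference, which should follow from the convexity of the exterior foliation together with the explicit asymptotics of $V$ and $\sigma_s$ as $s \to \infty$.
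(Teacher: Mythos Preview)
Your proposal is correct and follows essentially the same approach as the paper: the proposition is really a summary of the preceding discussion, and the proof consists precisely of propagating the initial bounds via the ODE inequalities to obtain \eqref{convex_condition_zero_mass_flow} on every leaf, then invoking Propositions~3.2, 4.1 and the proof of Proposition~4.3 of \cite{Chen18}. Your concern about simultaneously satisfying the constraints on $C,C'$ is not a genuine obstacle, since $CC'=2C_1$ can be relaxed to $CC'\geq 2C_1$ (inspect the two displayed inequalities just before the proposition), allowing one to first fix $C'$ by $\sqrt{C'}=\max\{C_2,C_3\}$ and then take $C$ large enough for both the contradiction argument and $CC'\geq 2C_1$.
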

\begin{definition}\label{def-convex2}
	If \eqref{convex_condition_zero_mass_initial} holds on $\S_0$ for those constants given by Proposition \ref{prop-cond2}, then we say that $\S_0$ satisfies the convexity condition ($\dagger\dagger$).
\end{definition}

	\section{Quasi-local charged Penrose inequalities}\label{S-proof}
We are now in a position to prove the quasi-local Penrose inequalities by gluing extensions of the form constructed above to a compact manifold with boundary. We first prove Theorem \ref{thm-QLPenrose-A}, and note that the proof is identical regardless of whether or not we impose convexity condition ($\dagger$) and $\overline m >0$, or convexity condition ($\dagger\dagger$) and $\overline m=0$.

	\begin{proof}[Proof of Theorem \ref {thm-QLPenrose-A}]
Let $\widetilde g$ be the metric constructed above using the foliation, where the boundary conditions for $u$ are yet to be specified.
		
		By the reasoning given in the preceding section, for any initial value $u>0$ on $\S_0$, the solution to the prescribed scalar curvature equation is a smooth solution that converges to $1$ as $s$ goes to infinity. Moreover, the quantity
		\be
		\int_{\Sigma_s} VH_o(1-\frac{1}{u})d\S_s
		\ee
		monotonically decreases to $\m_{ADM}(\widetilde g) - \overline m$ where $\overline m$ is the mass of the reference Reissner--Nordstr\"om manifold. Taking $\widetilde E$ to be the vector field 
		\bee
		\frac{1}{u} \left ( \overline E^s \frac{\partial}{\partial s} + \overline  E^T \right)
		\eee 
		as constructed above, where $\overline\cdot$ indicates quantities in the reference Reissner--Nordstr\"om manifold. Let $M_{ext}$ be the portion of the Reissner--Nordstr\"om manifold exterior to $\S_0$. The initial data $(M_{ext},\widetilde g,\widetilde E)$ then satisfies
		\begin{equation}
		\begin{split}
		R(\widetilde g) = & 2 |\widetilde E|_{\widetilde g}^2 \\
		\nabla_{\widetilde g}  \cdot \widetilde E = & 0. 
		\end{split}
		\end{equation}
		Moreover, the normal flux of $\widetilde E$ is independent of the choice of $u$ -- that is, $\widetilde \Phi = \overline \Phi$. As a result, assuming
		\be
		H > 2 | \overline\Phi -  \Phi | 
		\ee
		we may choose
		\[
		u= \frac{H_o}{H -2| \overline\Phi -  \Phi| }.
		\]
		Note that the mean curvature of $\p M_{ext}$ (with respect to $g$), is given by 
		\be 
		\widetilde H=\frac{H_o}{u}=H-2 | \overline\Phi -  \Phi |>0.
		\ee
		We remind the reader that at this point there are 4 distinct mean curvatures coming into play, $\widetilde H_o,\widetilde H, H$ and $H_o$.
		 
		We readily see that if we glue together $(\Omega,g,E)$ with $(M_{ext},\widetilde g,\widetilde E)$, it forms a charged manifold admitting corner along $\S$ such that the hypotheses of Theorem \ref{thm-penrose-corners-A} are satisfied. It follows that 	
		\be 
		\m_{ADM}\geq\( \frac{|\S_H|}{16\pi}\)^{1/2}\( 1+\frac{4\pi \overline Q^2}{|\S_H|} \)
		\ee 
		where $\overline Q$ is the charge of the reference Reissner--Nordstr\"om manifold.
		
		The monotonicity of quasi-local mass implies that 
		\be
		\m_{ADM}(\widetilde g) - \overline m \le  \int_{\Sigma_0} VH_o(1-\frac{1}{u})d\S_0 =\int_{\S}V(H_o-H+2 | \overline\Phi -  \Phi|  )d\S,
		\ee
		completing the proof.
	\end{proof}

By a closely related argument, we are also able to establish Theorems \ref{thm-QLPenrose-B} and \ref{thm-QLPenrose-C}.

\begin{proof}[Proof of Theorem \ref{thm-QLPenrose-B}]
	We take the same foliation as in the preceding proof, except we will choose a different boundary condition for $u$. Here we simply choose $u=\frac{H_o}{H}$ so that the mean curvature on the boundary of the constructed extension is $\widetilde H=H$. By hypothesis $\overline \Phi \leq \Phi$ on $\S$ and $\widetilde E$ is constructed to be divergence-free, so we apply Theorem \ref{thm-penrose-corners-B} to obtain
		\be 
\m_{ADM}\geq\( \frac{|\S_H|}{16\pi}\)^{1/2}\( 1+\frac{4\pi \overline Q^2}{|\S_H|} \)
\ee 
where $\overline Q$ is the charge of the reference Reissner--Nordstr\"om manifold.

The monotonicity of quasi-local mass implies that 
\be
\m_{ADM}(\widetilde g) - \overline m \le  \int_{\Sigma_0} VH_o(1-\frac{1}{u}) d\S_0=\int_{\S}V(H_o-H)\,d\S,
\ee
completing the proof.
\end{proof}
\begin{proof}[Proof of Theorem \ref{thm-QLPenrose-C}]
The proof is essentially identical to the above, except that Theorem \ref{thm-penrose-corners-C} is applied instead of Theorem \ref{thm-penrose-corners-B}, yielding the charge on $\S_H$ rather than at infinity.
\end{proof}
\begin{remark} \label{rem-equality}
By Corollary 3.3 of \cite{Chen18} one sees that $u\equiv 1$ in the case of equality for each of these three theorems. This in turn implies that $H\equiv H_o$ in Theorems \ref{thm-QLPenrose-B} and \ref{thm-QLPenrose-C}, and $H_o-H+2|\overline \Phi - \Phi|\equiv 0$ in Theorem \ref{thm-QLPenrose-A}. Furthermore, one then sees that if the reference Reissner--Nordstr\"om manifold is chosen to have zero mass, then this implies $|\S_H|=0$. By the assumption that $\S_H$ be non-empty, we conclude that analogous to the quasi-local Penrose inequality for the Brown--York mass (cf. \cite{LM}), these inequalities are in fact strict when $\overline m$ is chosen to be zero.
\end{remark}

\end{document}